\theoremstyle{plain}
\newtheorem{theorem}{Theorem}
\newtheorem{lemma}{Lemma}
\newtheorem*{theo*}{Theorem}
\newtheorem{corollary}{Corollary}
\theoremstyle{definition}
\newtheorem*{definition*}{Definition}
\newtheorem{remark}{Remark}
\def\ad{\mathop{\rm ad}}
\def\Der{{\rm Der}}
\begin{document}
\sloppy
\title[On  finite dimensional Lie algebras of planar vector fields with rational coefficients]
{On  finite dimensional Lie algebras  \\ of  planar vector fields with rational coefficients}
\author
{ Ievgen  Makedonskyi and  Anatoliy  Petravchuk}
\address{Ievgen  Makedonskyi:
Department of Algebra and Mathematical Logic, Faculty of Mechanics and Mathematics, Kyiv
Taras Shevchenko University, 64, Volodymyrska street, 01033  Kyiv, Ukraine}
\email{makedonskyi@univ.kiev.ua , makedonskyi.e@gmail.com}
\address{Anatoliy P. Petravchuk:
Department of Algebra and Mathematical Logic, Faculty of Mechanics and Mathematics, Kyiv
Taras Shevchenko University, 64, Volodymyrska street, 01033  Kyiv, Ukraine}
\email{aptr@univ.kiev.ua , apetrav@gmail.com}
\date{\today}
\keywords{Lie Algebra, Vector Field,  Derivation, Finite Dimensional Subalgebra}
\subjclass[2000]{Primary 17B66; Secondary 17B05, 13N15}

%\thanks{
% The second author was partially supported by the DFFD,
%grant F28.1/026}
%
\begin{abstract}
The Lie algebra of planar vector fields with coefficients from the
field of rational functions over an algebraically closed field of
characteristic zero is considered. We find all finite-dimensional Lie
algebras that can be realized as subalgebras of this algebra.
\end{abstract}
\maketitle

%%%%%%%%%%%%%%%%%%%%%%%%%%%%%%%%%%%%%%%%%%%%%%%%%%%%%%%%%%%

\section*{Introduction}
Let $\mathbb K$ be an algebraically closed field of characteristic zero and $R=\mathbb K(x, y)$
be the field of rational  functions.  Recall that a $\mathbb K$-linear mapping $D:  R\to R$ is called a $\mathbb K$-derivation if
$D(fg)=D(f)g+fD(g)$ for all $f, g\in R.$ We  denote by $\widetilde{W_2}(\mathbb K)$ the Lie algebra of all
$\mathbb K$-derivations of $R,$  this algebra is a two-dimensional vector space over $R,$ its  basis $\{ \frac{\partial}{\partial x},
\frac{\partial}{\partial y}\}$  will be called standard.  In geometric terms, a derivation $D$ is a vector field with rational coefficients and $\widetilde{W_2}(\mathbb K)$ is the Lie algebra of all vector fields on $\mathbb K^{2}$  with rational coefficients. The Lie algebra $\widetilde{W_2}(\mathbb K)$
is closely connected with the automorphism group ${\rm Aut} (R)$ of the field $R$ (for example if $D$ is a locally nilpotent derivation of $R,$   then $\exp D$ is an automorphism of $R$). The group ${\rm Aut} (R)$ was intensively studied  by many authors (see, for example \cite{DI}). A question about finite subgroups of ${\rm Aut} (R)$ is of special interest, the description of such subgroups was  recently completed
 by I.Dolgachev and V.Iskovskikh \cite{DI}. So, it is of interest to study finite dimensional subalgebras of the Lie algebra
 $\Der (R)=\widetilde{W_2}(\mathbb K)$ which corresponds in some sense to ${\rm Aut} (R)$.

 In this paper, we give  a description of finite dimensional subalgebras of $\widetilde{W_2}(\mathbb K)$ up to isomorphism as Lie algebras using only algebraic tools, so some results can  be transferred  to Lie algebras of derivations of extensions of fields. Such a description can  also be obtained (over the field of complex numbers) using analytical and geometric methods; it can be deduced from results of  S.Lie (see \cite{Lie}, S.71-73). There are many papers devoted to such subalgebras,  see for example  \cite{Draisma}, \cite{Olver},  \cite{Olver1}, \cite{PBNL}, \cite{Nesterenko}, \cite{J1}, \cite{Post}.
 The main result of the paper is Theorem \ref{main} where all types  of finite dimensional subalgebras of $\widetilde{W_2}(\mathbb K)$  are listed. From this description one can easily obtain  all possible types of finite dimensional subalgebras of the Lie algebra ${W_2}(\mathbb K)=\Der  \mathbb K[x, y]$ (up to isomorphism as Lie algebras). We do not consider the problem of finding all inequivalent realizations of such finite dimensional algebras up to automorphisms of the field $R=\mathbb K(x, y).$  Note  that this problem in the case $\mathbb K=\mathbb C$ can be easily solved using results of of S.Lie   \cite{Lie} (see also \cite{Olver} and  \cite{Nesterenko}).

We use standard  notations, the ground field $\mathbb K$ is  algebraically closed of characteristic zero (some results are valid for any field of characteristic $0$). If $D_{1}, \ldots , D_{n}$ are elements of $\widetilde{W_2}(\mathbb K)$, then we denote by $\mathbb K\langle D_{1}, \ldots , D_{n}\rangle$ or simply $\langle D_{1}, \ldots , D_{n}\rangle$
 the linear span of elements $D_{1}, \ldots , D_{n}$ over the field $\mathbb K.$ The field $\mathbb K(x, y)$ of rational functions  will be denoted by $R,$ every nonzero $\mathbb K$-subspace of $\widetilde{W_2}(\mathbb K)$ has rank $1$ or $2$ over $R$ as a system  of elements of the two-dimensional vector space $\widetilde{W_2}(\mathbb K)$ over $R.$

\section{Preliminaries}

\begin{lemma}\label{basic}
Suppose that $D_1, D_2 \in \widetilde{W}_2(\mathbb{K}).$ Then

$(1)$ For any $a, b \in R$ it holds
$[aD_1,  bD_2 ]=ab[ D_1, D_2]+aD_1(b)D_2-bD_2(a)D_{1}.$

$(2)$ If $D_1, D_2$ are linearly independent over $R$ and  $D_1(c)=D_2(c)=0$ for some $c\in R,$ then $c \in \mathbb{K}.$
\end{lemma}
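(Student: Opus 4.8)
The plan is to prove the commutator identity by direct computation, testing both sides on an arbitrary $f \in R$. For the left side, $[aD_1, bD_2](f) = aD_1(bD_2(f)) - bD_2(aD_1(f))$. Applying the Leibniz rule twice, $aD_1(bD_2(f)) = aD_1(b)D_2(f) + abD_1(D_2(f))$ and similarly $bD_2(aD_1(f)) = bD_2(a)D_1(f) + abD_2(D_1(f))$. Subtracting, the mixed second-order terms combine to $ab(D_1D_2 - D_2D_1)(f) = ab[D_1,D_2](f)$, and the remaining terms are exactly $aD_1(b)D_2(f) - bD_2(a)D_1(f)$. Since $f$ was arbitrary, the operator identity follows. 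This step is routine and poses no real obstacle.

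**Part (2).** Suppose $c \in R$ with $D_1(c) = D_2(c) = 0$; I want to conclude $c \in \mathbb{K}$. Write $D_i = f_i\frac{\partial}{\partial x} + g_i\frac{\partial}{\partial y}$ in the standard basis, $f_i, g_i \in R$. Linear independence of $D_1, D_2$ over $R$ means the matrix $M = \begin{pmatrix} f_1 & g_1 \\ f_2 & g_2 \end{pmatrix}$ has nonzero determinant $\delta = f_1 g_2 - f_2 g_1 \in R^\times$. The equations $D_1(c) = D_2(c) = 0$ read $M \begin{pmatrix} c_x \\ c_y \end{pmatrix} = 0$ where $c_x = \frac{\partial c}{\partial x}$, $c_y = \frac{\partial c}{\partial y}$. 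Since $M$ is invertible over $R$, this forces $c_x = c_y = 0$.

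It remains to show that a rational function $c \in \mathbb{K}(x,y)$ with $\frac{\partial c}{\partial x} = \frac{\partial c}{\partial y} = 0$ must be constant. This is the only place where something must actually be argued rather than computed mechanically, so I expect it to be the main (if modest) obstacle. One clean way: first show that $\frac{\partial}{\partial x}$ annihilates $c \in \mathbb{K}(x,y)$ only if $c \in \mathbb{K}(y)$, by writing $c = p/q$ with $p, q \in \mathbb{K}(y)[x]$ coprime and differentiating, using that $\mathbb{K}(y)[x]$ is a UFD of characteristic zero so $q \mid q'$ forces $q' = 0$ hence $q \in \mathbb{K}(y)$, and likewise $p' = 0$. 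Then $c \in \mathbb{K}(y)$ and $\frac{\partial c}{\partial y} = 0$ gives, by the same one-variable argument over the field $\mathbb{K}$, that $c \in \mathbb{K}$. Alternatively one can invoke that in characteristic zero the field of constants of the derivation $\frac{\partial}{\partial x}$ on $\mathbb{K}(x,y)$ is exactly $\mathbb{K}(y)$, a standard fact about derivations of rational function fields. Either route closes the proof.
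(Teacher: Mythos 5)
Your proposal is correct and follows essentially the same route as the paper: part (1) is the same direct Leibniz-rule computation, and in part (2) your observation that the coefficient matrix $M$ is invertible over $R$ is exactly the paper's remark that $\frac{\partial}{\partial x}$ and $\frac{\partial}{\partial y}$ are $R$-linear combinations of $D_1$ and $D_2$, whence $\frac{\partial c}{\partial x}=\frac{\partial c}{\partial y}=0$. The only difference is that you also prove the final step (a rational function with vanishing partials is constant), which the paper takes as known.
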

\begin{proof}
$1.$ Straightforward calculation.

$2.$ Note that $\frac{\partial}{\partial x}, $  $\frac{\partial}{\partial y} \in \widetilde{W}_2(\mathbb{K})$ are linear combinations of $ D_1$ and $ D_2$ with coefficients in $R.$  Then $\frac{\partial}{\partial x}(c)=\frac{\partial}{\partial x}(c)=0$ which implies $c\in \mathbb K.$
\end{proof}

\begin{lemma}\label{rank1}
Let $L$ be a finite dimensional subalgebra of the Lie algebra  $\widetilde{W}_{2}(\mathbb{K}).$ If $L$ is of rank $1$
over $R,$  then there exists  an element  $D_{1}\in \widetilde{W}_{2}(\mathbb{K})$ such that  $L$ is one of the following algebras:

$(1)$  $L=\langle D_{1}, a_{1}D_{1}, \ldots , a_{n}D_{1}\rangle $  for some  $a_{i}\in R$ such that  $D_{1}(a_{i})=0$ for all $i.$ The algebra  $L$ is abelian.

$(2)$ $L=\langle D_{1}, a_{1}D_{1}, \ldots , a_{n-1}D_{1}, bD_{1}\rangle $  for some  $a_{i}, b\in R$ such that $D_{1}(a_{i})=0$  for all $i,$  $D_{1}(b)=-1.$   $L$ is metabelian.

$(3)$  $L=\langle D_{1}, - a^{2}D_{1}, -2aD_{1} \rangle $  for some $a\in R$ with  $D_{1}(a)=1.$ The algebra  $L$ is isomorphic to $sl_{2}(\mathbb K ).$
\end{lemma}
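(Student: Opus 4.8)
The plan is to reduce the statement to a question about a finite-dimensional $\mathbb{K}$-subspace of $R$ carrying a twisted bracket, and then to treat the abelian, the non-solvable, and the solvable non-abelian cases separately.

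\emph{Reduction and the abelian case.} Since $L$ has rank $1$, fix $0\neq D_0\in\widetilde{W}_2(\mathbb{K})$ with $L\subseteq RD_0$ and write $L=\{fD_0:f\in V\}$ for a finite-dimensional $\mathbb{K}$-subspace $V\subseteq R$; by Lemma~\ref{basic}$(1)$ the bracket on $L$ becomes $\langle f,g\rangle:=fD_0(g)-gD_0(f)$ on $V$. For any $0\neq f_0\in V$ the element $D_1:=f_0D_0$ lies in $L$, and passing to the generator $D_1$ replaces $V$ by $U:=\{g:gD_1\in L\}$, which now contains $1$; since $[D_1,gD_1]=D_1(g)D_1$, the space $U$ is automatically invariant under the linear operator $g\mapsto D_1(g)$ on $U$ and closed under $\langle\,,\rangle$. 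If $L$ is abelian, then $\langle 1,g\rangle=D_1(g)=0$ for all $g\in U$, so a basis $1,a_1,\dots,a_n$ of $U$ exhibits $L$ in form $(1)$, which is visibly abelian.

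\emph{The case $L$ not solvable.} Then $L$ contains a copy of $\mathfrak{sl}_2(\mathbb{K})$, coming from a Levi subalgebra. Taking a standard triple $e,h,f$ and the generator $D_1:=e$, a direct computation gives $h=-2aD_1$, $f=-a^2D_1$ for some $a\in R$ with $D_1(a)=1$, so this copy is already in form $(3)$. Now view $L$ as a finite-dimensional module over this $\mathfrak{sl}_2$ via $\mathrm{ad}$; it is completely reducible, and $\mathrm{ad}_e=\mathrm{ad}_{D_1}$ acts by $uD_1\mapsto D_1(u)D_1$, hence nilpotently, so every highest weight vector $uD_1$ satisfies $D_1(u)=0$, i.e.\ $u\in\ker D_1$, and has $h$-weight $2$. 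Thus $L$ is a sum of copies of the adjoint representation, $L=\{(c_0+c_1a+c_2a^2)D_1:c_i\in C\}$ for a finite-dimensional $\mathbb{K}$-subspace $C\subseteq\ker D_1$ with $1\in C$; the relation $[cD_1,ac'D_1]=cc'D_1$ shows $C\cdot C\subseteq C$. A finite-dimensional $\mathbb{K}$-subalgebra of the field $\ker D_1$ is itself a field, hence equals $\mathbb{K}$; so $C=\mathbb{K}$ and $L=\langle D_1,-2aD_1,-a^2D_1\rangle\cong\mathfrak{sl}_2(\mathbb{K})$, which is form $(3)$.

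\emph{The case $L$ solvable and non-abelian.} Then $L'=[L,L]$ is nonzero, nilpotent (as $L$ is solvable), and of rank $1$. Any nonzero nilpotent rank-$1$ subalgebra is abelian: a nonzero central element $zD_1$ gives $D_1(z)=0$ from $[zD_1,D_1]=0$, and then $D_1(g)=0$ for every generator, forcing the bracket to vanish. Hence $L'$ is abelian; choose $0\neq D_1\in L'$, so $L'=\{gD_1:g\in U'\}$ with $1\in U'\subseteq\ker D_1$, and write $L=\{gD_1:g\in U\}$ with $1\in U\supseteq U'$ and $D_1|_U\neq0$. Since $L'=[L,L]$ we get $D_1(g)=\langle 1,g\rangle\in U'\subseteq\ker D_1$ for all $g\in U$, so $D_1^2|_U=0$. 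If $g\in U$ and $p:=D_1(g)\neq0$, then $p\in\ker D_1$ and $\langle g,p^k\rangle=-p^{k+1}$, so inductively $p^k\in U$ for all $k\geq1$; finite-dimensionality of $U$ together with $\mathbb{K}=\overline{\mathbb{K}}$ forces $p\in\mathbb{K}^{\times}$. Therefore $D_1(U)=\mathbb{K}\cdot1$; choosing $b\in U$ with $D_1(b)=-1$ and a basis $1,a_1,\dots,a_{n-1}$ of $U\cap\ker D_1$ puts $L$ in form $(2)$, which is metabelian since $L'$ is abelian. As every $L$ is abelian, solvable non-abelian, or non-solvable, this exhausts all cases.

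\emph{Main obstacle.} The delicate case is the non-solvable one, where one must rule out $\mathfrak{sl}_2$ enlarged by ``transverse constants''; the clean way is the representation-theoretic argument above combined with the collapse of a finite-dimensional subalgebra of a field to $\mathbb{K}$. In the solvable case the only trap is that an unlucky choice of generator can give the operator $D_1|_U$ a nonzero eigenvalue, masking the normal form; choosing the generator inside $L'=[L,L]$ forces $D_1^2|_U=0$ and removes the difficulty.
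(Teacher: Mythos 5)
Your proof is correct, but it takes a genuinely different route from the paper's. The paper does not re-derive the structure theory of rank-one subalgebras at all: it imports the trichotomy (abelian, or metabelian of the form $\langle b\rangle\rightthreetimes A$ with $[b,a]=a$ for $a\in A$, or $\simeq sl_2(\mathbb{K})$) from the proof of Theorem 1 of \cite{AMP}, adapted from $\mathbb{K}[x,y]$ to $R$, and then only computes the coefficient conditions $D_1(a_i)=0$, $D_1(b)=-1$, $b=-a^2/4$ in each of the three cases. You instead prove the trichotomy from scratch: in the solvable non-abelian case by placing the generator $D_1$ inside $L'$ (which is nilpotent, hence abelian by your central-element argument), so that $D_1(U)\subseteq\ker D_1$, and then using the chain $p^{k}\in U$ together with finite-dimensionality and $\mathbb{K}=\overline{\mathbb{K}}$ to force $D_1(U)=\mathbb{K}$; and in the non-solvable case by $sl_2$-representation theory — every vector killed by $\mathrm{ad}_{D_1}$ has $h$-weight $2$, so $L$ is a sum of adjoint copies indexed by a finite-dimensional unital multiplicatively closed subspace $C$ of the field $\ker D_1$, which must collapse to $\mathbb{K}$. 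Your version buys self-containedness (no appeal to \cite{AMP}) and the two devices you use — algebraicity of $p=D_1(g)$ from $p^k\in U$, and the collapse of a finite-dimensional subring of a field over an algebraically closed field — are clean and reusable; the paper's version is shorter precisely because it outsources this structural step. Two spots in your write-up are terse but not gaps: in the nilpotent-implies-abelian step you should say explicitly that centrality of $zD_1$ against an arbitrary $gD_1$ (not just against $D_1$) gives $zD_1(g)=0$ and hence $D_1(g)=0$; and the remark that $\mathrm{ad}_e$ acts nilpotently is not actually used — what matters is only that each irreducible summand's highest weight vector is an $h$-eigenvector killed by $\mathrm{ad}_e$, hence of weight $2$.
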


\begin{proof}
Replacing the polynomial ring $\mathbb K[x, y]$ by the field $R=\mathbb K(x, y)$  in the proof of Theorem 1 in \cite{AMP} one can show that a finite dimensional subalgebra of rank $1$  over $R$ from $\widetilde{W_2}(\mathbb K)$ is either abelian, or metabelian of the form $L=\langle b\rangle \rightthreetimes A, [b, a]=a$  for all $a\in A$ with  abelian $A,$   or $L\simeq sl_{2}(\mathbb K).$  Consider all these cases. If $L$ is abelian with $\mathbb K$-basis $\{ D_{1}, a_{1}D_{1}, \ldots , a_{n}D_{1}\}$ then $[D_{1}, a_{i}D_{1}]=0=D_{1}(a_{i})D_{1}$ for all $i.$ Hence $D_{1}(a_{i})=0$ for all $i$ and $L$ is of type 1. Let $L=\langle b\rangle \rightthreetimes A$ with abelian subalgebra $A=\{ D_{1}, a_{1}D_{1}, \ldots , a_{n-1}D_{1}\}.$ Then as above $D_{1}(a_{i})=0$ for all $i$ and since $[bD_{1}, D_{1}]=D_{1}$ we get $D_{1}(b)=-1.$ Thus $L$ is of type 2. Finally, let $L\simeq sl_{2}(\mathbb K).$ Choose the standard basis $\{ e, f, h\}$ of $L$ over $\mathbb K.$ Without loss of generality we may put $e=D_{1}, f=bD_{1}, h=aD_{1}$ for some $a, b\in R.$ Then
$$[aD_{1}, D_{1}]=2D_{1}, [aD_{1}, bD_{1}]=-2bD_{1},  [D_{1}, bD_{1}]=aD_{1},$$ so using Lemma \ref{basic} we get from the first equality that $D_{1}(a)=-2.$ The second equality implies $aD_{1}(b)+2b=-2b$ and therefore $D_{1}(b)=-4b/a.$ The third equality yields $D_{1}(b)=a.$ So, $a=-4b/a$ and $a^{2}=-4b,$ i.e. $b=-a^{2}/4.$ We get the basis $\{ D_{1}, -a^{2}/4D_{1}, aD_{1}\}$ of the algebra $L.$  Replacing here $a$  by $-a/2$ we obtain a basis
$\{ D_{1}, -a^{2}D_{1}, -2aD_{1}\}$  where $D_{1}(a)=1.$
\end{proof}

\begin{remark}\label{rem1}
One can easily point out realizations for Lie algebras from the previous Lemma:
1. $D_{1}=\frac{\partial}{\partial x}, a_{i}=y^{i}, i=1, \ldots ,n; $ 2. $ D_{1}=\frac{\partial}{\partial x}, a_{i}=y^{i}, i=1, \ldots , n-1, b=-x; $ 3. $ D_{1}=\frac{\partial}{\partial x}, a=x.$
\end{remark}
\begin{lemma}\label{ideals}
Let $L\not= 0$ be a  finite dimensional solvable subalgebra of the Lie algebra $\widetilde{W}_{2}(\mathbb{K})$ and
let $\langle D_{1}\rangle$  be its arbitrary one-dimensional ideal. Then

$(1) $ The set $I= RD_{1}\cap L$ is an ideal of $L.$

$(2)$  $\dim L/I\leq 2$ and if $\dim L/I=2,$ then the quotient algebra $L/I$ is nonabelian.

$(3)$  If $\dim L\geq 5,$ then the ideal $I$  contains all ideals of rank 1 over $R$ from $L.$
\end{lemma}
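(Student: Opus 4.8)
The plan is to establish the three parts in order, with essentially all of the work in part (2); parts (1) and (3) will then follow quickly.

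For part (1), I would argue directly. An arbitrary element of $I$ has the form $fD_{1}$ with $f\in R$, and for any $D\in L$ the hypothesis that $\langle D_{1}\rangle$ is an ideal gives $[D,D_{1}]=\lambda D_{1}$ for some $\lambda\in\mathbb K$. Then Lemma \ref{basic}(1) yields $[D,fD_{1}]=f[D,D_{1}]+D(f)D_{1}=(f\lambda+D(f))D_{1}\in RD_{1}$, while $[D,fD_{1}]\in L$ since $I\subseteq L$; hence $[D,fD_{1}]\in RD_{1}\cap L=I$, so $I$ is an ideal.

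For part (2), first note that if $L$ has rank $1$ over $R$ then $L\subseteq RL=RD_{1}$, so $L=I$ and there is nothing to prove. So assume $L$ has rank $2$ and choose $D_{2}\in L$ with $\{D_{1},D_{2}\}$ an $R$-basis of $\widetilde{W}_{2}(\mathbb K)$; write each $D\in L$ uniquely as $D=fD_{1}+gD_{2}$ with $f,g\in R$. The first key step is to show $D_{1}(g)=0$ for all such $D$: since $[D_{1},D_{2}]\in\mathbb K D_{1}$ (because $D_{2}\in L$) and $\langle D_{1}\rangle$ is an ideal, comparing $D_{2}$-components in $[D_{1},D]=D_{1}(f)D_{1}+g[D_{1},D_{2}]+D_{1}(g)D_{2}$ forces $D_{1}(g)=0$. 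Thus the ``$D_{2}$-coefficient'' map $\phi\colon L\to R$, $\phi(fD_{1}+gD_{2})=g$, has image in the subfield $\Ker D_{1}=\{h\in R:D_{1}(h)=0\}$ and kernel exactly $I=RD_{1}\cap L$. The second key step is a computation with Lemma \ref{basic}(1), again using $D_{1}(g)=D_{1}(g')=0$ and $[D_{1},D_{2}]\in RD_{1}$, showing that the $D_{2}$-component of $[fD_{1}+gD_{2},\,f'D_{1}+g'D_{2}]$ is $gD_{2}(g')-g'D_{2}(g)$. Hence $\phi$ descends to a Lie algebra isomorphism of $L/I$ onto $V:=\phi(L)\subseteq\Ker D_{1}$ equipped with the bracket $\{u,v\}=uD_{2}(v)-vD_{2}(u)$, and then $u\mapsto uD_{2}$ is an injective Lie homomorphism of $V$ into $RD_{2}\subseteq\widetilde{W}_{2}(\mathbb K)$, so $L/I$ is isomorphic to a finite-dimensional subalgebra $M$ of $\widetilde{W}_{2}(\mathbb K)$ of rank $\le 1$ over $R$.

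Now I would invoke Lemma \ref{rank1}. If $V=0$ we are done; otherwise $M$ has rank $1$ and, being solvable (a quotient of the solvable $L$), it is of type (1) or (2) in Lemma \ref{rank1} (type (3), being $sl_{2}(\mathbb K)$, is excluded). Writing its distinguished generator as $cD_{2}$ ($c\in R\setminus\{0\}$), one has $c\in V\subseteq\Ker D_{1}$ and, for each coefficient $a_{i}$ occurring in Lemma \ref{rank1}, both $D_{2}(a_{i})=0$ (from the lemma, since the generator kills the $a_i$) and $a_{i}c\in V\subseteq\Ker D_{1}$; since also $c\in\Ker D_{1}$, the quotient rule gives $D_{1}(a_{i})=0$, so $a_{i}\in\Ker D_{1}\cap\Ker D_{2}=\mathbb K$ by Lemma \ref{basic}(2). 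Therefore in case (1) all the $a_{i}D_{2}$ collapse, $M=\langle cD_{2}\rangle$ and $\dim L/I=1$; in case (2) they collapse too and $M=\langle cD_{2},\,bcD_{2}\rangle$ with $(cD_{2})(b)=-1$, so $b\notin\mathbb K$, $\dim M=2$, and $M$ is the non-abelian two-dimensional Lie algebra (as type (2) algebras of dimension two are). This gives $\dim L/I\le 2$ with $L/I$ non-abelian when the dimension is $2$.

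For part (3), let $J\ne 0$ be an ideal of $L$ of rank $1$ over $R$. Because $L$ is solvable and $\mathbb K$ is algebraically closed of characteristic zero, Lie's theorem applied to the adjoint action of $L$ on the finite-dimensional module $J$ produces a nonzero $D''\in J$ with $[D,D'']\in\mathbb K D''$ for all $D\in L$, i.e.\ $\langle D''\rangle$ is a one-dimensional ideal; since $J$ has rank $1$ and contains $D''$, we get $J\subseteq RD''$. Applying part (2) to the two ideals $\langle D_{1}\rangle$ and $\langle D''\rangle$ yields $\dim(RD_{1}\cap L)\ge\dim L-2\ge 3$ and $\dim(RD''\cap L)\ge\dim L-2\ge 3$. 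If $D_{1},D''$ were linearly independent over $R$ then $(RD_{1}\cap L)\cap(RD''\cap L)\subseteq RD_{1}\cap RD''=0$, forcing $2(\dim L-2)\le\dim L$, i.e.\ $\dim L\le 4$, a contradiction. Hence $RD''=RD_{1}$, so $J\subseteq RD''=RD_{1}$ and therefore $J\subseteq RD_{1}\cap L=I$. The main obstacle throughout is part (2): the crucial realization is that the $D_{2}$-coefficient of every element of $L$ is a first integral of $D_{1}$, which lets one identify $L/I$ with a rank-$1$ subalgebra of $\widetilde{W}_{2}(\mathbb K)$ and then use Lemma \ref{basic}(2) to collapse the a priori arbitrarily large abelian/metabelian part allowed by Lemma \ref{rank1}; once this is in place, parts (1) and (3) are routine.
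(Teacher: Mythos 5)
Your proof is correct, but for part (2) it takes a genuinely different route from the paper. The paper argues inside $L$ itself: it picks a one-dimensional ideal $\langle D_2+I\rangle$ of $L/I$, forms $J=I+\langle D_2\rangle$, and shows by a direct coefficient computation that any two elements $D_3,D_4\in L\setminus J$ satisfy $c_4D_3-c_3D_4\in J$ for suitable nonzero scalars $c_3,c_4$ (obtained as $D_2(b_i)$, after checking $D_1(b_i)=0$ and invoking Lemma \ref{basic}), whence $\dim L/J\leq 1$; the nonabelian claim is then a separate short contradiction argument. You instead observe that the $D_2$-coefficient map is a Lie algebra homomorphism of $L$ onto a finite-dimensional rank-$1$ subalgebra $M\subseteq RD_2$ with kernel exactly $I$, and then feed $M$ into the already-proved Lemma \ref{rank1}: the collapsing step (each $a_i$ lies in $\Ker D_1\cap\Ker D_2=\mathbb K$ by Lemma \ref{basic}(2)) kills the abelian tail, leaving only $\dim M=1$ or the two-dimensional nonabelian algebra, so both assertions of part (2) drop out simultaneously. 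Your approach is more structural and reuses the rank-$1$ classification, at the cost of depending on Lemma \ref{rank1}; the paper's is more computational but self-contained. Part (1) is identical in both. For part (3) you run the same dimension count $2(\dim L-2)\leq\dim L$ as the paper, but you add the explicit (and genuinely needed) step that an arbitrary rank-$1$ ideal contains a one-dimensional ideal of $L$ by Lie's theorem, a point the paper leaves implicit. All steps check out.
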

\begin{proof}
1. Take any element $D\in L.$ Since $\langle D_{1}\rangle$ is an ideal of $L$ we have
$[D, D_{1}]=\lambda D_{1}$ for some $\lambda \in \mathbb K$ depending on $D.$ Then for any element $aD_{1}\in I$ it holds
$$[D, aD_{1}]=D(a)D_{1}+a[D, D_{1}]=(D(a)+\lambda a)D_{1}\in I.$$
Therefore $I$ is an ideal of $L.$

2.  We can obviously assume that $I\not= L.$ Choose a one-dimensional ideal $\langle D_{2}+I\rangle$ of the quotient algebra $L/I.$
As $D_{2}\not\in I$ the elements $D_{1}, D_{2}$ are linearly independent over $R.$ It suffices to show that the ideal $J=I+\langle D_{2}\rangle$ of the algebra $L$ is of codimension $\leq 1$ in $L.$ Take arbitrary elements
$D_{3}=a_{3}D_{1}+b_{3}D_{2},$ $ D_{4}=a_{4}D_{1}+b_{4}D_{2}$ with $ a_{3}, a_{4}, b_{3}, b_{4} \in R$
from the set $L\setminus J.$  Since
$$ [D_{1}, D_{3}]=D_{1}(a_{3})D_{1}+D_{1}(b_{3})D_{2}+b_{3}\lambda D_{1}\in  \langle D_{1}\rangle$$
(here $[D_{2}, D_{1}]=\lambda D_{1}$) we get $D_{1}(b_{3})=0.$  Analogously from the relation $[D_{2}, D_{3}]\in J$ we have $D_{2}(b_{3})=c_{3}\in \mathbb K.$ Similar  calculations yield $D_{1}(b_{4})=0, D_{2}(b_{4})=c_{4}\in \mathbb K.$ It can be easily shown that $c_{3}\not= 0,  c_{4}\not= 0.$ Indeed, let to the contrary $c_{3}=0.$ Then the equalities $D_{1}(b_{3})=0, D_{2}(b_{3})=c_{3}=0$ imply by Lemma \ref{basic} that $b_{3}\in \mathbb K.$ This means that $a_{3}D_{1}\in L$ and as $a_{3}D_{1}\in I$ we get $D_{3}\in J.$ The latter  contradicts to the choice of $D_{3}.$  Analogously one can show that $c_{4}\not= 0.$ Consider the element $c_{4}D_{3}-c_{3}D_{4}$ of $L$ and write it in the form
$$(c_{4}a_{3}-c_{3}a_{4})D_{1}+(c_{4}b_{3}-c_{3}b_{4})D_{2}=r_{1}D_{1}+r_{2}D_{2}.$$
  Straightforward calculation shows that $D_{2}(r_{2})=0.$  As also  $D_{1}(r_{2})=0,$ the element $r_{2}$ belongs to $\mathbb K$ by Lemma \ref{basic}. Therefore $c_{4}D_{3}-c_{3}D_{4}\in J$ and $D_{3}, D_{4}$ are linearly dependent over $J,$ i.e. $\dim L/J\leq 1.$

Now let $\dim L/I=2$ and $\{D_{2}+I, aD_{1}+bD_{2}+I\}$ be a basis of $L/I.$ Suppose that $L/I$ is abelian. Then $[D_{2}, aD_{1}+bD_{2}]\in I$ and therefore $D_{2}(b)=0.$  From the relation $[aD_{1}+bD_{2}, D_{1}]\in
\langle D_{1}\rangle$ it follows that $D_{1}(b)=0.$ But then  Lemma \ref{basic} yields $b\in \mathbb K$  which implies
 $aD_{1}\in L.$ This means that $aD_{1}\in I$ and $aD_{1}+bD_{2}\in I+\langle D_{2}\rangle .$ The latter is impossible because the elements $D_{2}$ and $aD_{1}+bD_{2}$ are linearly independent over $I.$  This contradiction shows that $L/I$ is nonabelian.

3.  Finally, let $\dim L\geq 5,$  $I= RD_{1}\cap L$ and $T =RD_{2}\cap L$  for some  ideals  $\langle D_{1}\rangle$ and $\langle D_{2}\rangle .$  Suppose that  elements $D_{1}$ and $D_{2}$ linearly independent over $R.$      Since    $\dim L/I\leq 2 $
and $  dim L/T\leq 2$ (by the above proven)  and  $I\cap T=0$ we get  $\dim L\leq 4$ which contradicts to our assumption. Thus,  $I$ contains all ideals  of rank $1$ over $R.$
\end{proof}
We need also some elementary properties of  rational functions in a single variable.
These properties seem to be known but having no reference we supply  them with complete proofs.
For a rational function $\varphi \in \mathbb{K}(t)$ we will  denote $\varphi'=\frac{d \varphi}{d t}$. If $p(t)\in \mathbb K[t]$ is an irreducible polynomial, then  ${\rm ord}_{p}\varphi $ denotes as usually the integer $\alpha$ from the decomposition of
$\varphi$ into the product of the form  $\varphi =p^{\alpha} \psi ,$ where neither numerator nor the denominator of $\psi$ is divisible by $p.$

\begin{lemma} \label{rat_mulem}
Let $\mathbb{K}$ be an algebraically closed field of characteristic zero. Then:

$(1)$  If $\varphi(t) \in \mathbb{K}(t) \backslash \mathbb{K}$,
then there does not exist any function $\psi \in \mathbb{K}(t)$ such that $\psi'=\frac{\varphi'}{\varphi}$.

$(2)$  Let  $\varphi, \psi \in \mathbb{K}(t) \backslash \mathbb{K}$ be such functions that
$\mu \varphi' \psi-\varphi \psi'=0 $
for some $\mu \in \mathbb{K}$. Then $\mu \in \mathbb{Q}$, $\mu=\frac{m}{n}$, and $\varphi^m=c \psi^n$ for some $c \in \mathbb{K}$. Moreover,  there exists $\theta \in \mathbb{K}(t)$ such that $\varphi=c_{1} \theta^s$, $\psi=c_{2} \theta^t$
for some $c_{1}, c_{2}  \in \mathbb{K}$, $s,t \in \mathbb{Z}$.
\end{lemma}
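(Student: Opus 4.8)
The plan is to reduce both statements to the structure of logarithmic derivatives together with the fact that the derivative of a rational function has zero residue (i.e.\ zero coefficient of $(t-a)^{-1}$ in its partial fraction decomposition) at every point of $\mathbb{K}$.

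For $(1)$, since $\mathbb{K}$ is algebraically closed I would write $\varphi=c\prod_{i=1}^{k}(t-a_{i})^{m_{i}}$ with pairwise distinct $a_{i}\in\mathbb{K}$, nonzero integers $m_{i}$, and $c\in\mathbb{K}\setminus\{0\}$; because $\varphi\notin\mathbb{K}$ we have $k\ge 1$. Logarithmic differentiation gives the partial fraction identity $\varphi'/\varphi=\sum_{i=1}^{k}m_{i}/(t-a_{i})$, so $\varphi'/\varphi$ has a simple pole at $a_{1}$ with residue $m_{1}\ne 0$. On the other hand, for every $\psi\in\mathbb{K}(t)$ the derivative $\psi'$ has residue $0$ at each point of $\mathbb{K}$: in the Laurent expansion of $\psi$ at a point $a$, each term $\alpha(t-a)^{-j}$ with $j\ge 1$ differentiates to $-j\alpha(t-a)^{-j-1}$, which never contributes to the coefficient of $(t-a)^{-1}$, while the part of $\psi$ regular at $a$ differentiates to something regular at $a$. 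Hence $\psi'\ne\varphi'/\varphi$ for every $\psi\in\mathbb{K}(t)$, which proves $(1)$.

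For $(2)$, first note $\mu\ne 0$, since $\mu=0$ would give $\varphi\psi'=0$ with $\varphi\ne 0$, forcing $\psi'=0$ and $\psi\in\mathbb{K}$, contrary to hypothesis. Dividing $\mu\varphi'\psi-\varphi\psi'=0$ by $\varphi\psi\ne 0$ yields $\mu\,\varphi'/\varphi=\psi'/\psi$. Factoring $\varphi=c\prod_{i}(t-a_{i})^{\alpha_{i}}$ and $\psi=d\prod_{j}(t-b_{j})^{\beta_{j}}$ as in $(1)$ (all $\alpha_{i},\beta_{j}$ nonzero) and comparing poles of the two sides of $\mu\sum_{i}\alpha_{i}/(t-a_{i})=\sum_{j}\beta_{j}/(t-b_{j})$, one finds that the two sets of points coincide and that, after matching them, $\beta_{i}=\mu\alpha_{i}$ for each $i$; choosing an index $i$ with $\alpha_{i}\ne 0$ (which exists since $\varphi\notin\mathbb{K}$) shows $\mu=\beta_{i}/\alpha_{i}\in\mathbb{Q}$. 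Writing $\mu=m/n$ with $\gcd(m,n)=1$ we get $m\,\varphi'/\varphi=n\,\psi'/\psi$, hence $(\varphi^{m}/\psi^{n})'=(\varphi^{m}/\psi^{n})\bigl(m\varphi'/\varphi-n\psi'/\psi\bigr)=0$, so $\varphi^{m}/\psi^{n}=c'\in\mathbb{K}$, that is $\varphi^{m}=c'\psi^{n}$.

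For the last assertion, from $n\beta_{i}=m\alpha_{i}$ and $\gcd(m,n)=1$ it follows that $n\mid\alpha_{i}$; writing $\alpha_{i}=n\gamma_{i}$ with $\gamma_{i}\in\mathbb{Z}$ gives $\beta_{i}=m\gamma_{i}$, and then $\theta:=\prod_{i}(t-a_{i})^{\gamma_{i}}\in\mathbb{K}(t)$ satisfies $\varphi=c\,\theta^{n}$ and $\psi=d\,\theta^{m}$, which is the asserted form (with the two integer exponents being $n$ and $m$). I expect the only step requiring real care to be the justification of the residue computation for $\psi'$ in part $(1)$ — this is exactly the phenomenon preventing $\varphi'/\varphi$ from having an antiderivative in $\mathbb{K}(t)$ — while everything else is bookkeeping with partial fractions and one elementary divisibility argument.
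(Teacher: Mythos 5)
Your proof is correct; the difference from the paper's lies in which local invariant of a rational function you track. For part (1) the paper works with the valuation $\mathrm{ord}_p$ at an irreducible polynomial $p$ dividing the numerator or denominator of $\varphi$: it shows $\mathrm{ord}_p(\varphi'/\varphi)=-1$ exactly, while $\mathrm{ord}_p(\psi')$ can never equal $-1$ (it is $\ge 0$ or equals $\beta-1$ with $\beta=\mathrm{ord}_p(\psi)\neq 0$). You instead compare residues, using that a derivative has zero coefficient of $(t-a)^{-1}$ at every point while $\varphi'/\varphi$ has residue $m_1\neq 0$ at a zero or pole of $\varphi$. These are cousin arguments: the paper's works verbatim over any field of characteristic zero without factoring into linear terms, whereas yours is the classical ``derivatives have no residues'' argument and uses the algebraic closure you are explicitly given. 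For part (2) the paper expands $\varphi$ and $\psi$ as Laurent series at one point where $\varphi$ has a zero or pole, reads off $\mu\in\mathbb{Q}$ from the lowest-order coefficient of the relation, and then produces $\theta$ separately by comparing the irreducible factorizations of the two sides of $\varphi^n=c\psi^m$. Your single partial-fraction identity $\mu\sum_i\alpha_i/(t-a_i)=\sum_j\beta_j/(t-b_j)$ yields the exponent relation $n\beta_i=m\alpha_i$ at every point simultaneously, so that both $\mu\in\mathbb{Q}$ and the common root $\theta$ fall out of the same divisibility bookkeeping; this is a somewhat more economical organization of the same content. You also make explicit the observation, left implicit in the paper, that $\mu=0$ is excluded because it would force $\psi\in\mathbb{K}$.
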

\begin{proof}
1. Suppose  on the contrary that there exists  $\psi \in \mathbb{K}(t)$ such that $\psi'=\frac{\varphi'}{\varphi}$.
Let $p \in \mathbb{K}[t]$ be an irreducible polynomial such that ${\rm ord}_p (\varphi) \neq 0$.
Put $\alpha ={\rm ord}_p (\varphi)$. Then $\varphi=p^{\alpha} q$ and $\varphi'=\alpha p^{\alpha-1}p'q+p^{\alpha} q'$.
Therefore
\[\frac{\varphi'}{\varphi}=\frac{\alpha p' p^{\alpha-1} q+p^{\alpha} q'}{p^{\alpha} q}=\frac{\alpha  q p'+p q'}{p q}. \]
Since ${\rm ord}_p(\alpha qp'+pq')=0$  it holds  ${\rm ord}_p\left( \frac{\varphi'}{\varphi} \right)=-1$ (note that  ${\rm ord}_p(q)=0$).
 Now put $\beta={\rm ord}_p(\psi)$, $\psi=p^{\beta} r$. Then $\psi'=\beta p' p^{\beta-1} r+p^{\beta} r'$. If $\beta=0$, then
$\psi'= r'$ and  ${\rm ord}_p(\psi')={\rm ord}_p(r') \geq 0$. Suppose that   $\beta \neq 0.$ Then
\[{\rm ord}_p(\psi')={\rm ord}_p(\beta p' p^{\beta-1} r+p^{\beta} r')={\rm ord}_p(\beta p' p^{\beta-1} r)=\beta-1. \]
Therefore in any  case ${\rm ord}_p{\psi'} \neq -1$, which contradicts  to the equality ${\rm ord}_p\left( \frac{\varphi'}{\varphi} \right)=-1$. Hence there does not  exist such a polynomial $\psi$ that  $\psi'=\frac{\varphi'}{\varphi}.$

2.  Take any functions $\varphi , \psi $  from  $\mathbb{K}(t) \backslash \mathbb{K}$ satisfying  the condition
 \begin{equation}
\mu \varphi' \psi-\varphi \psi'=0. \label{rat_mu}
\end{equation}
It can be easily shown that there exists a  point $t_0 \in \mathbb{K}$  such that ${\rm ord}_{t-t_0}\varphi \neq 0$ ( because the field $\mathbb{K}$ is algebraically closed).
Without loss of generality we can assume that the field $\mathbb{K}(t)$ is embedded to the field $\mathbb{K}((t))$ of Laurent series at the point $t_0$.
Put
\[\varphi=\sum_{i=m}^\infty \alpha_i (t-t_0)^i, \psi=\sum_{i=n}^\infty \beta_i (t-t_0)^i, {\rm where }~ m, n \in \mathbb{Z}, \alpha_m\beta_n \neq 0.\]
Since  $ord_{t-t_0}\varphi \neq 0,$ it holds $m \neq 0$,
We can assume that $\alpha_m= \beta_n = 1$, because the equation (\ref{rat_mu}) is homogeneous.
Computing  coefficients at $t^{m+n-1}$ in  both sides of the equation (\ref{rat_mu}) we obtain $\mu m=n$. Therefore $\mu=n/m \in \mathbb{Q}$. Further,
\[\left( \frac{\varphi^n}{\psi^m} \right)'=\frac{n \varphi^{n-1}\varphi'\psi^m-m \varphi^{n}\psi^{m-1}\psi'}{\psi^{2m}}=
\frac{\varphi^{n-1}\psi^{m-1}(n \varphi' \psi-m\varphi \psi')}{\psi^{2m}}=0,\]
because $n \varphi' \psi-m\varphi \psi'=m(\mu \varphi' \psi-\varphi \psi')=0$. Hence,  $\frac{\varphi^n}{\psi^m} \in \mathbb{K}$
i.e.  $\varphi^n=c \psi^m$ for some  $c \in \mathbb{K}$.

The functions  $\varphi$ and $\psi$ can be written as products of  irreducible factors with (nonzero) integer powers
\[\varphi=\prod_{i=1}^s u_i^{k_i},    \   \psi=\prod_{j=1}^k v_i^{l_i}.\]
Using the equality $\varphi^n=c \psi^m$ we get   $k=s$ and after renumbering the factors  we can  write down
$u_i=\gamma_i v_i$ for some $\gamma_i \in \mathbb{K}$. Hence we have:
\[\left(\prod_{i=1}^k u_i^{k_i} \right)^n=c\left(\prod_{i=1}^k (\gamma_i u_i)^{l_i} \right)^m.\]
This equality implies that $n k_i=m l_i$ for all $i=1, \dots , k$.
Denote $d=\gcd (m,n)$ and $m=m_1 d$, $n=n_1 d$. We obtain  equalities $n_1 d k_i=m_1 d l_i$, $i=1, \dots , k$,
and therefore $n_1 k_i=m_1 l_i$. Since $\gcd(m_1, n_1)=1$ we obtain  that $l_i$ is divisible by $n_1,$
$k_i$ is divisible by $m_1$, $i=1, \dots , k.$ Denote  $\frac{l_{i}}{n_{1}}=\frac{k_{i}}{m_{1}}=r_{i}$  and  $\theta =\prod_{i=1}^s u_i^{r_i}$.
Then $\varphi=\theta^{m_1}$, $c_1 \psi=\theta^{n_1}$ for some $c_1 \in \mathbb{K}^\ast$.
This completes the proof of  Lemma.
\end{proof}

\begin{lemma} \label{properties}
Let $D_1$ and $D_2$ be elements of $\widetilde{W_2}(\mathbb{K})$ linearly  independent over $R$  such that
$[D_2, D_1]=\nu D_1$ for some $\nu \in \mathbb K.$  Let $b_1, b_2$  be linearly independent over $\mathbb K$ elements of $R \backslash \mathbb{K}$ such that $D_1(b_i)=0, i=1,2.$  Then:

$(1)$  If  $[D_2, b_i D_1]=\lambda_i b_i D_1$ for some $\lambda_i \in \mathbb{K}, i=1,2,$
then $\lambda_1 \neq \lambda_2$. If $\lambda_1 \neq \nu$, then $\frac{\lambda_2-\nu}{\lambda_1 -\nu} \in \mathbb{Q}.$

$(2)$  If  $[D_2, b_1D_1]=\lambda b_1D_1, [D_2, b_2 D_1]=\lambda b_2D_1 + b_1D_1 $ for some $\lambda \in \mathbb{K},$ then $\lambda =\nu$.
\end{lemma}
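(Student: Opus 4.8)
The plan is to turn the bracket hypotheses into differential equations for $D_2$ acting on $b_1,b_2$ and then reduce everything to rational functions of a single variable, where Lemma~\ref{rat_mulem} can be applied. First, using Lemma~\ref{basic}(1) together with $[D_2,D_1]=\nu D_1$, I would compute that for any $b\in R$ with $D_1(b)=0$ one has $[D_2,bD_1]=(\nu b+D_2(b))D_1$. Hence the relation $[D_2,b_iD_1]=\lambda_ib_iD_1$ is equivalent to $D_2(b_i)=(\lambda_i-\nu)b_i$, and in part $(2)$ the second relation becomes $D_2(b_2)=(\lambda-\nu)b_2+b_1$. Write $\mu_i=\lambda_i-\nu$ (resp.\ $\mu=\lambda-\nu$).

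The inequality $\lambda_1\neq\lambda_2$ in $(1)$ needs no reduction: if $\mu_1=\mu_2$, then a direct check gives $D_1(b_1/b_2)=D_2(b_1/b_2)=0$, so $b_1/b_2\in\mathbb K$ by Lemma~\ref{basic}(2), contradicting the $\mathbb K$-independence of $b_1,b_2$.

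For the remaining assertions I would reduce to one variable. Since $b_1,b_2\in\Ker D_1$, $D_1\neq 0$, and derivations extend uniquely to algebraic extensions in characteristic zero, the functions $b_1,b_2$ cannot be algebraically independent over $\mathbb K$ (otherwise $R$ is algebraic over $\mathbb K(b_1,b_2)\subseteq\Ker D_1$, forcing $D_1=0$); hence $\mathbb K(b_1,b_2)$ is a transcendence degree $1$ subfield of $R$, so $\mathbb K(b_1,b_2)=\mathbb K(\theta)$ for some $\theta\in R\setminus\mathbb K$ by L\"uroth's theorem. In both cases $D_2(b_1),D_2(b_2)\in\mathbb K(b_1,b_2)$, so $D_2$ restricts to a derivation of $\mathbb K(\theta)$; thus $D_2|_{\mathbb K(\theta)}=f\tfrac{d}{d\theta}$ with $f=D_2(\theta)\in\mathbb K(\theta)$, and writing $b_i=\varphi_i(\theta)$ with $\varphi_i\in\mathbb K(\theta)\setminus\mathbb K$ the relations become $f\varphi_i'=\mu_i\varphi_i$ in case $(1)$, and $f\varphi_1'=\mu\varphi_1$, $f\varphi_2'=\mu\varphi_2+\varphi_1$ in case $(2)$ (note $f\neq 0$ once the relevant $\mu$ is nonzero, since $f\varphi_1'=\mu\varphi_1\neq 0$). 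Then: in $(1)$ with $\mu_1\neq 0$, eliminating $f$ from the two equations yields $\tfrac{\mu_2}{\mu_1}\varphi_1'\varphi_2-\varphi_1\varphi_2'=0$, so $\tfrac{\mu_2}{\mu_1}=\tfrac{\lambda_2-\nu}{\lambda_1-\nu}\in\mathbb Q$ by Lemma~\ref{rat_mulem}(2); in $(2)$, assuming $\mu\neq 0$ a short computation using both relations gives $(\varphi_2/\varphi_1)'=1/f=\varphi_1'/(\mu\varphi_1)$, hence $\psi=\mu\varphi_2/\varphi_1\in\mathbb K(\theta)$ satisfies $\psi'=\varphi_1'/\varphi_1$ with $\varphi_1\notin\mathbb K$, contradicting Lemma~\ref{rat_mulem}(1); therefore $\mu=0$, i.e.\ $\lambda=\nu$.

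The step I expect to be the main obstacle is the reduction to one variable: establishing that $\mathbb K(b_1,b_2)$ is a rational function field (this is where algebraic dependence of $b_1,b_2$ and L\"uroth's theorem enter) and that $D_2$ genuinely descends to a derivation of it, so that Lemma~\ref{rat_mulem} becomes applicable. Once this is in place, both parts of the statement reduce to elementary manipulations with logarithmic derivatives.
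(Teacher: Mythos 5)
Your proof is correct and follows essentially the same route as the paper: translate the bracket relations into $D_2(b_i)=(\lambda_i-\nu)b_i$ (resp.\ $D_2(b_2)=(\lambda-\nu)b_2+b_1$), reduce to a simple transcendental subfield $\mathbb K(\theta)$ on which $D_2$ acts as $f\,d/d\theta$, and apply Lemma \ref{rat_mulem}. The only (immaterial) difference is that the paper takes $\theta$ to be a generator of $\Ker D_1$, citing the fact that this subfield has transcendence degree $1$ and is simple, whereas you generate $\mathbb K(b_1,b_2)$ via the L\"uroth-type theorem and check $D_2$-stability directly from the hypotheses --- both rest on the same underlying fact and lead to identical computations.
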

\begin{proof} 1.  Using the condition  $[D_2, b_i D_1]=\lambda_i b_i D_1$  we get
 \begin{equation}
 D_2(b_i)=(\lambda_i-\nu )b_i, i=1,2. \label{Db}
 \end{equation}
 Suppose that  $\lambda_1=\lambda_2.$
Then $D_2 \left (\frac{b_1}{b_2}\right )=\frac{D_{2}(b_1)b_2-b_1D_{2}(b_2)}{b_2^2}=0$. Besides, $D_{1}(\frac{b_{1}}{b_{2}})=0$ by
 conditions of Lemma.
 Then using linear independence of  elements $D_1, D_2$ we obtain  by Lemma \ref{basic} the inclusion  $ \frac{b_1}{b_2} \in \mathbb{K}$.
 The latter is impossible because of  linear independence of elements $b_1, b_2$ over $\mathbb{K}$.
 Hence  $\lambda_1 \neq \lambda_2$.

 Let now $\lambda _{1}\not= \nu .$ Since $b_1, b_2 \in R \backslash \mathbb{K}$, the  subfield $\ker (D_1)$ of $R$ is of  transcendence degree $1$ over $\mathbb K$
 (it is obvious that this degree cannot be equal to $2$). Hence $\ker D_{1}$  is generated by a single element
 (see, for example, \cite{Schinzel}, Th.3 and \cite{NN}). Denote this element by $\theta$. Then $b_1=\varphi_1(\theta)$, $b_2=\varphi_2(\theta)$ for some rational functions $ \varphi _{1}(t) ,$  $ \varphi _{2}(t) \in \mathbb K(t).$
 Using the relation  $[D_2, D_1]=\nu D_1$ we see  that $D_2(\theta) \in \ker(D_1)$.
 Denote also $D_2(\theta)=f(\theta)$, $f \in \mathbb{K} (t)$.
 The conditions  (\ref{Db}) imply
$$ \varphi '_{1}(\theta)f(\theta)=(\lambda _{1}-\nu )\varphi _{1}(t), \    \varphi '_{2}(\theta)f(\theta)=(\lambda _{2}-\nu )\varphi _{2}(\theta).$$
 Since $\varphi_i$ are not constants and $\lambda_1-\nu \neq 0$ we have:
$$\varphi_1\varphi_2'-\mu \varphi_1'\varphi_2=0, \  \mbox{\rm where } \ \mu=\frac{\lambda_2-\nu}{\lambda_1-\nu}.$$
 Now Lemma \ref{rat_mulem}  yields  the inclusion  $\mu \in \mathbb{Q}$.

 2. By the condition (\ref{Db}) of Lemma we have
 \begin{equation}
 D_2(b_1)=(\lambda  -\nu)b_1, ~D_2(b_2)=(\lambda -\nu)b_2+b_1. \label{Db1}
 \end{equation}

 As above  we can show  that $b_1=\psi_1(\theta)$, $b_2=\psi_2(\theta)$, where $\theta$ is a  generator of the subfield $\ker D_1$ and  $D_2(\theta)=g(\theta)$ for some rational functions  $\psi_1, \psi_2, g \in \mathbb{K}(t)$. Using (\ref{Db1}) one can easily show  that
 \begin{equation}
 \psi_1'g=(\lambda- \nu)\psi_1, ~\psi_2'g=(\lambda-\nu)\psi_2+\psi_1. \label{phi1}
 \end{equation}
 Since $b_1 \in R \backslash \mathbb{K}$ it holds  $\psi_1' \neq 0$.
 The equality (\ref{phi1}) implies the next relations
 \begin{equation}
 \frac{\psi_1'}{\psi_1}=
 \frac{(\lambda-\nu)\psi_2'}{(\lambda-\nu)\psi_2 +\psi_1}=\left(\frac{(\lambda-\nu)\psi_2}{\psi_1} \right)' \label{cont}
\end{equation}
 (note that $(\lambda-\nu)\psi_2 +\psi_1 \neq 0$ because $\psi_1$ and $\psi_2$ are linearly independent over $\mathbb{K}$).
 But the relation (\ref{cont})  is impossible if $\lambda \neq \nu$ by Lemma \ref{rat_mulem}.
 This contradiction shows that  $\lambda =\nu$.
 \end{proof}

 The next statement can be easily deduced from the theorem of S.Lie about solvable Lie algebras.
\begin{lemma}\label{Lie_conseq}
 Let $V$ be a finite dimensional vector space over the field $\mathbb K$ and $T, S$ be linear operators on $V.$
 If $[T, S]=S,$  then the operator $S$ is nilpotent.

\end{lemma}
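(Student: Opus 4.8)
The plan is to realize $\langle T, S\rangle$ as a solvable Lie subalgebra of $\mathrm{End}(V)$ and invoke Lie's theorem, exactly as the sentence preceding the statement indicates.

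First I would dispose of the trivial case. If $T$ and $S$ are linearly dependent over $\mathbb{K}$, write $S=\alpha T$; then $S=[T,S]=\alpha[T,T]=0$, which is nilpotent. So assume $T,S$ are linearly independent. Then $\mathfrak g:=\mathbb{K}T\oplus\mathbb{K}S$ is a two-dimensional subspace of $\mathrm{End}(V)$ closed under the commutator, since $[T,S]=S\in\mathfrak g$ and $[T,T]=[S,S]=0$; hence $\mathfrak g$ is a Lie subalgebra. Its derived algebra is $[\mathfrak g,\mathfrak g]=\mathbb{K}S$, which is one-dimensional and abelian, so $\mathfrak g$ is solvable.

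Now, because $\mathbb{K}$ is algebraically closed of characteristic zero, Lie's theorem on solvable Lie algebras applies: there is a basis of $V$ in which every operator of $\mathfrak g$ — in particular both $T$ and $S$ — is upper triangular. For upper triangular matrices the diagonal entries of a product are the products of the corresponding diagonal entries, so $TS$ and $ST$ have the same diagonal; consequently $S=[T,S]=TS-ST$ is strictly upper triangular and therefore nilpotent.

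I do not expect a genuine obstacle here: the only thing to be careful about is that the hypotheses of Lie's theorem (algebraically closed, characteristic zero) are part of the standing assumptions on $\mathbb{K}$. As an alternative that bypasses Lie's theorem, one can argue directly: a straightforward induction gives $[T,S^k]=kS^k$ for all $k\ge 1$, so each nonzero power $S^k$ is an eigenvector of $\ad T$ acting on the finite-dimensional space $\mathrm{End}(V)$, with eigenvalue $k$; since $\mathrm{char}\,\mathbb{K}=0$ these eigenvalues are pairwise distinct and $\ad T$ has only finitely many eigenvalues, forcing $S^k=0$ for $k$ large, i.e. $S$ nilpotent.
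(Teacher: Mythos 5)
Your main argument is correct and is essentially the paper's own (the paper gives no written proof, only the remark that the lemma ``can be easily deduced from the theorem of S.\,Lie about solvable Lie algebras''; your triangularization of the solvable algebra $\mathbb{K}T\oplus\mathbb{K}S$ is exactly that deduction, and the dependent case $S=\alpha T$ is handled correctly). Your alternative argument via $[T,S^k]=kS^k$ is also valid and is in fact slightly stronger, since it needs only $\mathrm{char}\,\mathbb{K}=0$ and not algebraic closure.
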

\section{Finite dimensional solvable subalgebras of $\widetilde{W_2}(\mathbb{K})$}
  \begin{lemma}\label{solv1}
  Let $L$ be a  finite dimensional solvable subalgebra of  rank $2$ over $R$ of   $\widetilde{W_2}(\mathbb{K})$
  and let  $\langle D_{1}\rangle$ be its arbitrary one dimensional ideal. Denote $I=RD_{1}\cap L.$
  If  the ideal $I$ is abelian,   then there exists an element  $ D_{2}\in L\backslash I$ such that $L$ is one of the following algebras:

 {\rm  (1)}  $L=\langle D_{1}, aD_{1},  \dots , (a^{n}/n!)D_{1}, D_{2} \rangle$, where $a\in R$ such that
  $D_{1}(a)=0, D_{2}(a)=1, [D_{2}, D_{1}]=\lambda D_{1}$ and $\lambda =0$ or $\lambda =1, $ $ n\geq 1.$ If $n=0,$ we put $L=\langle D_{1}, D_{2}\rangle .$

  {\rm  (2)}   $L=\langle D_{1}, a_{1}D_{1}, \ldots , a_{n}D_{1}, D_{2} \rangle ,$  where $a_{i}\in R,  [D_{2}, D_{1}]=D_{1},
  D_{1}(a_{i})=0,  D_{2}(a_{i})=\beta m_{i}a_{i},  m_{i}\in \mathbb Z \ \  \mbox{for all} \  i, \beta \in \mathbb K^{\star}, m_{i}\not= m_{j}  \ \ \mbox{for}\  i\not= j, $ $ n\geq 1. $

 {\rm  (3)}    $L=\langle D_{1}, aD_{1},  \dots , (a^{n}/n!)D_{1}, D_{2}, bD_{1}+aD_{2} \rangle$, where  $a, b \in R$
 such that  $D_{1}(a)=0, D_{1}(b)=\beta , \beta \in \mathbb K,$ $ [D_{2}, D_{1}]=0, , D_{2}(a)=1, D_{2}(b)=(n+1)\gamma a^{n},  \gamma \in \mathbb K, $ $ n\geq 1$ (if $n=0$  we put $L=\langle D_{1}, D_{2}, bD_{1}+aD_{2}\rangle $).
  \end{lemma}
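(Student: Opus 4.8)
The plan is to analyze the structure of $L$ using the ideal $I = RD_1 \cap L$ together with Lemma \ref{ideals}, which tells us that $\dim L/I \le 2$ and, when $\dim L/I = 2$, the quotient $L/I$ is nonabelian. Since $L$ has rank $2$ over $R$ there exists $D_2 \in L \setminus I$ linearly independent of $D_1$ over $R$, and because $\langle D_1\rangle$ is an ideal we have $[D_2, D_1] = \nu D_1$ for some $\nu \in \mathbb{K}$; by rescaling $D_2$ (if $\nu \ne 0$) we may assume $\nu \in \{0,1\}$. The first main step is to understand $I$ itself: every element of $I$ has the form $aD_1$ with $D_1(a) = 0$ (as in Lemma \ref{rank1}), so $I = \langle D_1, a_1 D_1, \dots, a_k D_1\rangle$ with $a_i \in \ker D_1$ and, since $I$ is abelian by hypothesis, no further bracket relations among these. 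Then I would study the action of $\ad D_2$ on $I$: since $I$ is an ideal, $[D_2, a_i D_1] = (D_2(a_i) + \nu a_i) D_1 \in I$, so the map $a \mapsto D_2(a)$ preserves the finite-dimensional $\mathbb{K}$-space spanned by $1, a_1, \dots, a_k$ inside $\ker D_1$, and the operator $\ad D_2|_I$ is a linear operator whose Jordan structure drives the case split.

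The second main step is the case analysis on $\nu$ and on whether $\ad D_2|_I$ is semisimple or has nontrivial nilpotent part, always keeping $\dim L/I \le 2$ in mind (so $L = I + \langle D_2\rangle$ in the codimension-one case, or $L = I + \langle D_2, D_3\rangle$ with $D_3 = bD_1 + aD_2$ in the codimension-two case). In the codimension-one nilpotent situation, $\ad D_2$ acts on $I$ as a single Jordan block: choosing a generator $\theta$ of the transcendence-degree-one field $\ker D_1$ and writing $a_i = \varphi_i(\theta)$, $D_2(\theta) = f(\theta)$, the relations $\varphi_i' f = (\lambda_i - \nu)\varphi_i + (\text{lower})$ together with Lemma \ref{rat_mulem} and Lemma \ref{properties} force the shape of the $\varphi_i$; one finds that the nilpotent case yields $\varphi_i = a^i/i!$ with $D_2(a) = 1$ (after normalizing $a$), which is case (1), while the case $\lambda = \nu$ with a Jordan block of size $\ge 2$ is excluded unless $\nu$ and the weights cooperate exactly as in Lemma \ref{properties}(2). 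In the semisimple case $\nu = 1$, the eigenvalues $\lambda_i$ of $\ad D_2|_I$ satisfy $D_2(b_i) = (\lambda_i - 1)b_i$, and Lemma \ref{properties}(1) gives $(\lambda_i - 1)/(\lambda_j - 1) \in \mathbb{Q}$, so all $\lambda_i - 1$ are rational multiples of a common $\beta \in \mathbb{K}^\star$, i.e. $\lambda_i = 1 + \beta m_i$ with $m_i \in \mathbb{Z}$ distinct — but after reindexing and absorbing we should check it lands as $D_2(a_i) = \beta m_i a_i$, which is case (2); here one uses that $\ker D_1$ generated by $\theta$ with $b_i = c_i \theta^{m_i}$ coming from Lemma \ref{rat_mulem}(2).

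The third step handles $\dim L/I = 2$: then $L/I$ is nonabelian two-dimensional, so it has a basis with bracket $[\bar D_2, \bar D_3] = \bar D_3$ (or $\bar D_2$). Writing $D_3 = bD_1 + aD_2$, I would extract from $[D_2, D_3] \equiv D_3 \pmod I$ the scalar equations $D_2(a) = 1$ (forcing $a \notin \mathbb{K}$) and a relation on $D_2(b)$, and from $[D_1, D_3] \in \langle D_1\rangle$ (using $[D_1,D_2]$) the equation $D_1(a) = 0$, $D_1(b) = \beta$ for some $\beta \in \mathbb{K}$; the bracket $[D_3, a_iD_1] \in I$ pins down $D_2(a_i)$ in terms of $a_i$ and $a$, forcing again $a_i = a^i/i!$ and $D_2(b) = (n+1)\gamma a^n$ for a constant $\gamma$, and $[D_2,D_1] = 0$ (i.e. $\nu = 0$) because $\nu = 1$ would, via Lemma \ref{Lie_conseq} applied to the solvable action, contradict the simultaneous presence of the nonabelian quotient and the required nilpotency. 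This gives case (3). The main obstacle I anticipate is the bookkeeping in this last case: showing that the "polynomial-in-$a$" ansatz $a_i = a^i/i!$ is forced (not merely consistent) and that all the scalar constants collapse to the single parameters $\beta, \gamma$ — this requires careful use of Lemma \ref{rat_mulem} to rule out logarithmic-type obstructions in $\ker D_1 \cong \mathbb{K}(\theta)$, and careful tracking of how rescaling $D_1, D_2, D_3$ normalizes the constants without introducing new ones.
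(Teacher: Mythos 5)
Your plan follows the paper's proof essentially step for step: the same reduction via Lemma \ref{ideals} to the dichotomy $\dim L/I\in\{1,2\}$, the same Jordan-form analysis of $\ad D_{2}$ on $I$ controlled by Lemma \ref{properties} (single block giving type (1), diagonalizable with $\mathbb{Q}$-commensurable eigenvalues via Lemma \ref{rat_mulem} giving type (2)), and the same use of Lemma \ref{Lie_conseq} to force $\ad D_{2}$ to act nilpotently on $I$ in the codimension-two case, yielding type (3). The only deviations are cosmetic (e.g.\ the orientation of the bracket in the nonabelian quotient $L/I$, where the paper takes $[bD_{1}+cD_{2},D_{2}]\equiv D_{2}$ and first gets $D_{2}(c)\in\mathbb{K}$ before normalizing the $D_{2}$-coefficient to $a$), so the proposal is a correct outline of the same argument.
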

  \begin{proof}
The set $I=RD_{1}\cap L$ is an ideal of $L$ by Lemma \ref{ideals}. We can write  $I=\langle D_{1}, a_{1}D_{1}, \ldots , a_{n}D_{1}\rangle$ for some elements $a_{i}\in R$ and $n\geq 1$ (if $n=0$ we put  $I=\langle D_{1}\rangle$). Since the ideal $I$ is abelian  we have $D_{1}(a_{i})=0, \ i=1, \ldots , n.$ We consider two cases depending on $\dim L/I$ (recall that $\dim L/I\leq 2$ by Lemma \ref{ideals}).

{\underline {Case 1.}} $\dim L/I=1.$ Take any element $D_{2}\in L\backslash I.$ As $\langle D_{1}\rangle$ is an ideal of $L$ we have $[D_{2}, D_{1}]=\nu D_{1}$ for some $\nu \in \mathbb K.$ The elements $D_{1}$ and $D_{2}$ are linearly independent over $R$ by the choice of the ideal $I.$  First, let the linear operator $\ad D_{2}$ have the only eigenvalue $\nu$ on the vector space $I$ (recall that $[D_{2}, D_{1}]=\nu D_{1}$). If $aD_{1}, bD_{1}\in I$ are eigenvectors of $\ad D_{2},$ i.e. $      [D_{2}, aD_{1}]=\nu aD_{1},$ $  [D_{2}, bD_{1}]=\nu bD_{1},$ then the  elements $aD_{1}, bD_{1}$ are linearly dependent over $\mathbb K$ by Lemma \ref{properties}. Hence $D_{1}$ is the unique eigenvector of $\ad D_{2}$ on $I$  (up to multiplication by a nonzero scalar).  But then the  linear operator $\ad D_{2}$ has a Jordan basis in $I$ of the form
$\{ D_{1}, a_{1}D_{1}, \ldots , a_{n}D_{1}\},$  $a_{i}\in R$ such that
$$  [D_{2}, a_{i}D_{1}]=\nu a_{i}D_{1}+a_{i-1}D_{1}, i=1, \ldots , n, [D_{2}, D_{1}]=\nu D_{1}    $$
(in case $n=0$ we have $I=\langle D_{1}\rangle $).
The last relations imply the equalities $D_{2}(a_{i})=a_{i-1}, i=2, \ldots , n,$ $ D_{2}(a_{1})=1.$  Denoting  $a=a_{1}$ we have $D_{2}(a_{2}-a^{2}/2!)=0$ and taking into account the relation $D_{1}(a_{2}-a^{2}/2!)=0$  we see by Lemma \ref{basic} that $a_{2}-a^{2}/2!\in \mathbb K.$ But then without loss of generality we can take $a_{2}=a^{2}/2!.$  Analogously $D_{2}(a_{3}-a^{3}/3!)=a_{2}-a_{2}=0$ and $D_{1}(a_{3}-a^{3}/3!)=0,$  so we can put $a_{3}=a^{3}/3!.$
Repeating these considerations we get a $\mathbb K$-basis $\{ D_{1}, aD_{1}, \ldots, (a^{n}/n!)D_{1}\}$ of the ideal $I$
(recall that $I=\langle D_{1}\rangle$ in case $n=0$).
The algebra Lie $L$ is of type 1 because we always can assume  that $\nu =0$ or $\nu =1$ choosing a convenient multiple of the element $D_{2}.$

Now let $\ad D_{2}$ have on $I$ at least two different eigenvalues. Our aim is to show that $\ad D_{2}$ is a diagonalizable operator on $I.$ Denote by $I(\lambda )$ the root space of $\ad D_{2}$ corresponding to the eigenvalue $\lambda , \lambda \not= \nu .$ Since $\ad D_{2}$ has on $I(\lambda )$ the only eigenvalue $\lambda $ it follows from the previous considerations that $\ad D_{2}$ has on $I(\lambda )$ a Jordan basis such that the matrix of $\ad D_{2}$ in this basis is a single Jordan block. Therefore if $\dim I(\lambda )>1$ then there exist elements $aD_{1}, bD_{1}\in I$ such that
$$[D_{2}, aD_{1}]=\lambda aD_{1},   [D_{2}, bD_{1}]=\lambda bD_{1}+aD_{1}.$$
The latter is impossible by Lemma \ref{properties} and therefore $\dim I(\lambda )=1.$  Choosing any element $D_{1}'\in I$ with property $[D_{2}, D_{1}']=\lambda D_{1}'$ instead of the element $D_{1}$ and using Lemma \ref{properties} we can analogously show that $\dim I(\nu )=1,$  where $I(\nu )$ is the root space corresponding to the eigenvalue $\nu$ of $\ad D_{2}$ on $I.$  Therefore  all the root spaces are one-dimensional and $\ad D_{2}$ is diagonalizable  on $I.$

Since at least one of the eigenvalues of $\ad D_{2}$ on $I$ is nonzero we can choose elements $D_{1}$ and $D_{2}$ in such a way that
$$[D_{2}, D_{1}]=D_{1}, I=\langle D_{1}, a_{1}D_{1}, \dots , a_{n}D_{1}\rangle ,$$ where $[D_{2}, a_{i}D_{1}]=\lambda _{i}a_{i}D_{1}, \lambda _{i}\not= \lambda _{j}$ if $ i\not= j$ and $ \lambda _{i} \not= 1, i=1, \ldots , n.$

Applying Lemma \ref{properties} (with $\nu =1$) we can easily show that $\frac{\lambda _{i}-1}{\lambda _{1}-1}=\tau _{i}\in \mathbb Q,$ $i=2, \ldots , n.$  Denote $\tau _{i}=\frac{k_{i}}{l_{i}}, k_{i}, l_{i}\in \mathbb Z, i=2, \ldots , n.$
If $l$ is the least common multiple of $l_{2}, \ldots , l_{n},$ then one can write $\tau _{i}=\frac{m_{i}}{l}$ and $\lambda _{i}=m_{i}\beta +1,$ where $\beta =\frac{\lambda _{1}-1}{l}$ (note that $\lambda _{i}-1=\tau _{i}(\lambda _{1}-1)$).
Thus, $L$ is an algebra of type 2 of Lemma.

{\underline {Case 2.}} $\dim L/I=2.$ The quotient algebra $L/I$ is nonabelian by Lemma \ref{ideals}, so it contains a noncentral one-dimensional ideal $\langle D_{2}+I\rangle.$ Then there exists an element $bD_{1}+cD_{2}\in L$ such that
$$ [bD_{1}+cD_{2}+I, D_{2}+I]=D_{2}+I.     $$
This means that $[bD_{1}+cD_{2}, D_{2}]=D_{2}+gD_{1}$ for some element $gD_{1}\in I.$ Since the ideal $I$ is abelian it is obvious that $\ad D_{2}=\ad (D_{2}+gD_{1})$ on the vector space $I$ over $\mathbb K.$ We obtain the following relation for linear operators on $I:$
$$[\ad (bD_{1}+cD_{2}), \ad D_{2}]=\ad (D_{2}+gD_{1})=\ad D_{2}.   $$
But then $\ad D_{2}$ acts nilpotently on $I$ by Lemma \ref{Lie_conseq}.
 In case $\dim I=1$ we get (after direct calculations) the Lie algebra of type 3 with $n=0.$  Let $\dim I\geq 2.$ Since $[D_{2}, D_{1}]=0$ one can easily show (using Lemma \ref{ideals}) that  the ideal $I$ can be written in the form $I=\langle D_{1}, aD_{1}, \ldots , (a^{n}/n!)D_{1}\rangle$ for some $a\in R, D_{2}(a)=1, n\geq 1.$

Returning now to the above mentioned element $bD_{1}+cD_{2}\in L$ we see that
$$ [D_{1}, bD_{1}+cD_{2}]=D_{1}(b)D_{1}+D_{1}(c)D_{2}\in \langle D_{1}\rangle    $$
and therefore $D_{1}(c)=0, D_{1}(b)\in \mathbb K.$  Further, from the equality $$[D_{2}, bD_{1}+cD_{2}]=D_{2}(b)D_{1}+D_{2}(c)D_{2}\in I+\langle D_{2}\rangle .$$ we obtain  $D_{2}(c)=\gamma \in \mathbb K,$
$D_{2}(b)\in \langle 1, a, a^{2}/2!, \ldots , a^{n}/n!\rangle.$  From the relations $D_{2}(c)=\gamma \in \mathbb K$ and $ D_{2}(a)=1$ it follows that $D_{2}(\gamma a-c)=0.$ Then  Lemma \ref{basic} yields  $\gamma a-c\in \mathbb K,$ i.e. $c=\gamma a+b$ for some $\gamma , \beta \in \mathbb K.$

The element $D_{3}=\gamma ^{-1}(bD_{1}+cD_{2}-\beta D_{2})$ of the algebra $L$ can be written in the form
$D_{3}=b_{1}D_{1}+aD_{2}$ for some $b_{1}\in R.$  As $D_{2}(b_{1})\in \langle 1, a, a^{2}/2!, \ldots , a^{n}/n!\rangle$ we can substract from $b_{1}D_{1}+aD_{2}$ a suitable linear combination of the elements $D_{1}, aD_{1}, a^{2}/2!D_{1}, \ldots , a^{n}/n!D_{1}$ and assume without loss of generality that $D_{2}(b_{1})=(n+1)\gamma a^{n}$ for some $\gamma \in \mathbb K.$  Denoting $b=b_{1},$ $\beta=D_{1}(b)\in \mathbb K$ we see that  $L$ is of type 3 of this Lemma.

 \end{proof}

\begin{remark}\label{rem2}
For each type of Lie algebras from Lemma \ref{solv1} one can easily point out a realization:

1. $\lambda =0, D_{1}=\frac{\partial}{\partial x}, D_{2}=\frac{\partial}{\partial y}, a=y.$
 $\lambda =1, D_{1}=\frac{\partial}{\partial x}, D_{2}=\frac{\partial}{\partial y}-x\frac{\partial}{\partial x}, a=y.$

2. $D_{1}=\frac{\partial}{\partial x}, a_{i}=y^{m_{i}}, D_{2}=\beta y\frac{\partial}{\partial y}-x\frac{\partial}{\partial x}, \beta \in \mathbb K.$

3. $D_{1}=\frac{\partial}{\partial x}, D_{2}=\frac{\partial}{\partial y}, $ $a=y, b=\beta x+\gamma y^{n+1}, \beta , \gamma \in \mathbb K.$
\end {remark}
  \begin{lemma}\label{solv2}
Let $L$ be a subalgebra of  $\widetilde{W_2}(\mathbb{K})$ satisfying all the conditions of the previous Lemma with the exception of  that the ideal $I$ is abelian. If  $I$ is nonabelian, then there exist elements $D_{1}\in I, D_{2}\in L\backslash I$ such that $L$ is one of the following algebras:

$(1)$  $L=\langle D_{1}, aD_{1}, \ldots , (a^{n-1}/(n-1)!)D_{1}, bD_{1}, D_{2}\rangle$, where $a, b\in R$  such that  $ D_{1}(a)=0, D_{2}(a)=1, D_{1}(b)=-1, D_{2}(b)=0, [D_{2}, D_{1}]=0.$

$(2)$  $L=\langle D_{1}, a_{1}D_{1}, \dots , a_{n-1}D_{1}, bD_{1}, D_{2}  \rangle$, where $a_{i}, b \in R$ such that $[D_{2}, D_{1}]=D_{1}, D_{1}(a_{i})=0, D_{1}(b)=-1, D_{2}(b)=-b, D_{2}(a_{i})=\beta m_{i}a_{i} $ for some $  m_{i}\in \mathbb Z, \beta \in \mathbb{K}^{\star} $ and $  m_{i}\not= m_{j} \  \mbox{if} \ i\not= j.$

$(3)$  $L=\langle D_{1}, aD_{1}, \ldots , (a^{n-1}/(n-1)!)D_{1}, (v-\alpha a^{n})D_{1}, D_{2}, (-\beta v+\gamma (a^{n}/n!))D_{1}-aD_{2}\rangle ,$ where $a, v\in R$ such that $[D_{1}, D_{2}]=0, D_{1}(a)=0, D_{2}(a)=1, D_{1}(v)=-1, D_{2}(v)=0, \alpha , \beta \in \mathbb K,$ and  $ \gamma=\alpha (\beta  -n).$
\end{lemma}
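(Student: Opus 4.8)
\textbf{Proof plan for Lemma \ref{solv2}.}
The plan is to run the analysis in close parallel with the proof of Lemma \ref{solv1}, but now exploiting that the ideal $I=RD_1\cap L$ is nonabelian. By Lemma \ref{rank1} applied to $I$ (which has rank $1$ over $R$), the only nonabelian possibility is the metabelian type (2): there is $D_1\in I$ and elements $a_i,b\in R$ with $D_1(a_i)=0$, $D_1(b)=-1$, so that $I=\langle D_1,a_1D_1,\dots,a_{n-1}D_1,bD_1\rangle$ with $\langle D_1,a_1D_1,\dots,a_{n-1}D_1\rangle$ abelian and $[bD_1,a_iD_1]=0$, $[bD_1,D_1]=D_1$. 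Fix such a presentation. As before, split into the two cases $\dim L/I=1$ and $\dim L/I=2$, choosing $D_2\in L\setminus I$ with $[D_2,D_1]=\nu D_1$, $\nu\in\mathbb K$, and noting $D_1,D_2$ are independent over $R$ since $L$ has rank $2$.

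In the case $\dim L/I=1$, I would study the action of $\ad D_2$ on $I$. The key point is that $\ad D_2$ preserves the derived subalgebra $I'=[I,I]$; since $I$ is metabelian of the above form, $I'$ is spanned by $D_1$ and the eigenvalue of $\ad D_2$ on the line $\langle D_1\rangle$ is $\nu$. One then decomposes $I$ into generalized eigenspaces of $\ad D_2$. Using Lemma \ref{properties}(1) and (2) exactly as in Lemma \ref{solv1}, each generalized eigenspace that meets the "abelian part" is at most one-dimensional, so $\ad D_2$ is either a single Jordan block on the abelian part (with all eigenvalues equal to $\nu$, forcing the $a^{k}/k!$ normal form and, after rescaling, $\nu=0$; this together with the relation $[bD_1,D_1]=D_1$ and $D_2(b)=0$ — obtained by a direct computation of $[D_2,bD_1]\in\langle D_1\rangle$ modulo the abelian part — gives type (1)) or it is diagonalizable with distinct eigenvalues $\lambda_i=\nu+\beta m_i$; rescaling $D_2$ to get $\nu=1$ and applying Lemma \ref{properties}(1) to convert the ratios of eigenvalues into rationals gives the $m_i\in\mathbb Z$, and computing $[D_2,bD_1]$ modulo $\langle D_1\rangle$ yields $D_2(b)=-b$, which is type (2). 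Throughout one uses Lemma \ref{basic}(2) to replace $a_k$ by $a^k/k!$ when $D_1$ and $D_2$ kill the difference.

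In the case $\dim L/I=2$, by Lemma \ref{ideals}(2) the quotient $L/I$ is nonabelian, so it has a noncentral one-dimensional ideal $\langle D_2+I\rangle$ and an element $bD_1+cD_2$ of $L$ with $[bD_1+cD_2+I,D_2+I]=D_2+I$. As in Lemma \ref{solv1}, passing to operators on $I$ and using Lemma \ref{Lie_conseq} shows $\ad D_2$ is nilpotent on $I$; in particular $\nu=0$, i.e.\ $[D_1,D_2]=0$, and the abelian part of $I$ takes the form $\langle D_1,aD_1,\dots,(a^{n-1}/(n-1)!)D_1\rangle$ with $D_2(a)=1$, while the extra generator of $I$ is $(v-\alpha a^n)D_1$ with $D_1(v)=-1$, $D_2(v)=0$ (the constants $\alpha$ coming from normalizing away the ambiguity in the choice of generator, after absorbing multiples of the $a^k/k!\,D_1$). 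Then one computes the brackets of $D_1$ and $D_2$ with $bD_1+cD_2$: from $[D_1,bD_1+cD_2]\in\langle D_1\rangle$ get $D_1(c)=0$, $D_1(b)\in\mathbb K$; from $[D_2,bD_1+cD_2]\in I+\langle D_2\rangle$ get $D_2(c)\in\mathbb K$ and a constraint on $D_2(b)$. Using $D_2(a)=1$ and Lemma \ref{basic}(2) one solves $c=-a$ up to a constant, and after subtracting suitable elements of $I$ and rescaling, the fifth generator becomes $(-\beta v+\gamma(a^n/n!))D_1-aD_2$; the relation $\gamma=\alpha(\beta-n)$ then drops out of the requirement that this element, bracketed with $D_1$, lands back in $L$ (equivalently, that the bracket with $(v-\alpha a^n)D_1$ stays inside the span), which is type (3).

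The main obstacle will be the bookkeeping in Case 2: keeping track of all the normalization constants $\alpha,\beta,\gamma$ simultaneously and verifying that the single relation $\gamma=\alpha(\beta-n)$ is exactly what is needed for closure of $L$ under bracket. The representation-theoretic input (Lemma \ref{properties} and Lemma \ref{Lie_conseq}) is already available, so the conceptual content of Case 1 is routine once the metabelian structure of $I$ is used to locate $I'=\langle D_1\rangle$; the work is almost entirely in the explicit bracket computations and the repeated application of Lemma \ref{basic}(2) to put $I$ in the stated normal form.
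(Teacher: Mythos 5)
Your overall strategy matches the paper's (identify the nonabelian rank-one ideal $I$ via Lemma \ref{rank1}, split on $\dim L/I$, use Lemmas \ref{properties} and \ref{Lie_conseq}, then normalize), but your setup contains a concrete error that propagates. You write that $I=\langle D_1,a_1D_1,\dots,a_{n-1}D_1,bD_1\rangle$ with $[bD_1,a_iD_1]=0$ and hence that $I'=[I,I]$ is spanned by $D_1$. This contradicts Lemma \ref{rank1}: the only nonabelian solvable possibility there is $\langle b\rangle\rightthreetimes A$ with $[b,a]=a$ for \emph{all} $a\in A$, and indeed $[bD_1,a_iD_1]=bD_1(a_i)D_1-a_iD_1(b)D_1=a_iD_1$ since $D_1(a_i)=0$ and $D_1(b)=-1$. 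So $I'=\langle D_1,a_1D_1,\dots,a_{n-1}D_1\rangle$ is the entire $n$-dimensional abelian part, and $\ad(bD_1)$ acts on it as the identity operator.

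This is not a cosmetic slip: the identity action of $\ad(bD_1)$ on $I'$ is exactly the lever used at the two places your sketch glosses over. In the case $\dim L/I=1$ one must first rule out a nonzero $bD_1$-component in $[D_2,bD_1]=\lambda bD_1+(\text{element of }I')$: if $\lambda\neq 0$, then $[\ad D_2,\ad(bD_1)]=\lambda\,\ad(bD_1)$ on the abelian ideal $I'$, so $\ad(bD_1)$ would be nilpotent there by Lemma \ref{Lie_conseq}, contradicting that it is the identity; only after this (and after adjusting $D_2$ so that $[D_2,bD_1]=0$ exactly) can one apply Lemma \ref{properties} to $\ad D_2$ on $I'$. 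Note also that Lemma \ref{properties} requires $D_1(b_i)=0$ and so never applies to $bD_1$ itself, which is why your plan to "decompose $I$ into generalized eigenspaces and treat $bD_1$ by a direct computation modulo the abelian part" does not suffice as stated. Similarly, in the case $\dim L/I=2$ you import the argument of Lemma \ref{solv1} ("passing to operators on $I$ \dots shows $\ad D_2$ is nilpotent on $I$"), but that argument used that $I$ is abelian, so that inner derivations coming from $I$ vanish on $I$; here one must first show that $bD_1+I'$ is central in $L/I'$ (again via $\ad(bD_1)=\mathrm{id}$ on $I'$ and Lemma \ref{Lie_conseq}), conclude $[D,bD_1]\in I'$ for all $D\in L$, and then replace $D_2$ by $D_2+\mu bD_1$ to make $\ad D_2$ nilpotent on $I'$. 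The remainder of your normalization plan (solving $c=-a$ up to constants, extracting $v$, and deriving $\gamma=\alpha(\beta-n)$ from closure of the bracket of the last two generators) is in line with the paper, but the proof cannot be completed until the bracket structure of $I$ and the identification of $I'$ are corrected.
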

\begin{proof}
Let $\langle D_{1}\rangle $ be the one-dimensional ideal of $L$ lying in $I.$ The ideal $I$ has by Lemma \ref{rank1} a basis over $\mathbb K$ of the form $\{ D_{1}, a_{1}D_{1}, \ldots , a_{n-1}D_{1}, bD_{1}\},$  where $D_{1}(a_{i})=0, D_{1}(b)=-1, i=1, \ldots , n-1$
(for $n=0$ we put  $I=\langle D_{1}, bD_{1}\rangle$ with $D_{1}(b)=-1$). Suppose that $n=0,$ i.e. $\dim I=2.$  If $\dim L/I=1,$ then
$L=\langle D_{1}, bD_{1}, D_{2}\rangle$ is of type 2 or 3. If $\dim L/I=2,$ then $L/I$ is nonabelian by Lemma \ref{ideals}
and taking into account that $L/I$ is nonabelian we have $L=I\oplus J$ for nonabelian ideal $J$ of dimension $2.$ Then $L$ is of type 3. So we may assume that $\dim I\geq 3.$
As in the previous Lemma we divide the proof into following cases:

{\underline {Case 1.}}  $\dim L/I=1.$  Take any element $D_{2}\in L\backslash I.$ Then $[D_{2}, bD_{1}]=\lambda bD_{1}+cD_{1},$ where $cD_{1}\in I'=[I, I]$ because $\dim L/I'=2$ and $\langle bD_{1}+I'\rangle$ is a one-dimensional ideal of $L/I'.$  If $\lambda \not= 0,$ then we may assume without loss of generality that $\lambda =1,$ and then
$$ [\ad D_{2}, \ad (bD_{1})]=\ad (bD_{1}+cD_{1})=\ad (bD_{1})$$ on $I'$
because $I'$ is an abelian ideal of $L.$ But then the linear operator $\ad (bD_{1})$ acts nilpotently on $I'$ by Lemma \ref{Lie_conseq}. The latter is impossible and therefore $\lambda =0.$ This means that $L/I'$ is an abelian Lie algebra of dimension $2.$
As $[D_{2}, bD_{1}]=cD_{1}$ for some element $cD_{1}\in I'$  we get $[D_{2}+cD_{1}, bD_{1}]=0$ (recall that  $[bD_{1}, cD_{1}]=cD_{1}$ for all $cD_{1}\in I'$).  So, we can choose the element $D_{2}$ in such a way that $[D_{2}, bD_{1}]=0.$ If the linear operator $\ad D_{2}$ has on $I'=\langle D_{1}, \ldots , a_{n-1}D_{1}\rangle$  at least two different eigenvalues, then there exists by Lemma \ref{properties} a basis $\{  D_{1}, \ldots , a_{n-1}D_{1}\}$ of $I'$ such that $D_{2}(a_{i})=m_{i}\beta a_{i}, $ for some $ m_{i}\in \mathbb Z, \beta \in {\mathbb K}^{\star}, $  $m_{i}\not= m_{j}$ if $i\not= j, [D_{2}, D_{1}]=D_{1}.$ Then from the relation $[D_{2}, bD_{1}]=0$ it follows $D_{2}(b)=-b.$
The algebra $L$ is of type 2 of Lemma.

Now let $\ad D_{2}$ have the only eigenvalue $\mu $ on $I'.$  If $\mu =0,$ then $L$ is obviously the Lie algebra of type 1 of Lemma. Let $\mu \not= 0.$ Taking a suitable multiple of $D_{2}$ we may assume that $\mu =1.$ Then replacing  the element $D_{2}$ by the element $D_{2}-bD_{1}$ we get the case $\mu =0$ and $L$ is again of type 1 of Lemma.

{\underline {Case 2.}}  $\dim L/I=2.$
As in the case 1 take a one-dimensional ideal $\langle D_1 \rangle$ of $L$
lying in $I'$ and a basis of $I$ of the form $\lbrace D_1,a_1D_1, \dots a_{n-1}D_1, bD_1\rbrace$
such that $D_1(a_i)=0$, $D_1(b)=-1$, $i=0, \dots n-1$.
Let $\langle D_2 + I \rangle$ be the one-dimensional ideal of the nonabelian quotient algebra $L/I$.
Accordingly to  Case 1 the algebra $\langle D_2 \rangle + I$ is of
type 1 or type 2 of this Lemma. Let us show that  $\langle D_2 \rangle + I$ is of type 1
of this Lemma, i. e. the linear operator $\ad D_2$ acts nilpotently on $I'$.
Really since $\langle bD_1 + I' \rangle$ is an ideal of the algebra $L/I'$
and $\ad (bD_1)$ acts on $I'$ as the identity operator the ideal $\langle bD_1 + I' \rangle$
lies in the center of $L/I'$ (because of Lemma  \ref{Lie_conseq}),
i. e. $[D, bD_1] \in I'$ for any element $D \in L.$
Take any element $cD_1 + dD_2 \in L \backslash I$ such that $[cD_1 + dD_2, D_2]=D_2 +rD_1$
for some element $rD_1 \in I$. The element $rD_1$ can be written in the form
$rD_1=\mu b D_1 +r_1D_1$, where $\mu \in \mathbb{K}$, $r_1D_1 \in I'$.
But then we obtain
\[[cD_1+bD_2, D_2 +\mu b D_1]=(D_2 +\mu b D_1)+ r_2D_1\]
for some element $r_2D_1 \in I'$
The latter means that $\ad (D_2+\mu b D_1)$ acts nilpotently on $I'$ (by Lemma \ref{Lie_conseq}).
Replacing the element $D_2$ by the element  $D_2+\mu b D_1$ we can
assume without loss of generality that $\ad D_2$ is a nilpotent linear operator on
$I'$. So, the subalgebra $\langle D_2 \rangle + I$ is of type 1 of this Lemma
and hence  $I'+ \langle D_2 \rangle $ can be written in the form
\[I' + \langle D_2\rangle =\langle D_1, aD_1, \dots \frac{a^{n-1}}{(n-1)!}D_1, D_2, \rangle\]
where $[D_2, D_1]=0$, $D_1(a)=0$, $D_2(a)=1$.

Further, it follows from the  above mentioned equality
\begin{equation}\label{comd2}
[cD_1+dD_2,D_2]=D_2 +r_2 D_1
\end{equation}
that $D_2(d)=-1$.
Analogously we obtain $D_1(d)=0$, $D_1(c)=\beta_1 \in \mathbb{K}$ from the relation
$[cD_1+dD_2,D_1] \in \langle D_1 \rangle$.
Since $D_2(a)=1$ and $D_2(d)=-1$ we have $D_2(a+d)=0$. Taking into account the equality
$D_1(a+d)=0$  we obtain by Lemma \ref{basic} that $a+d =\alpha_1 \in \mathbb{K}$. But then
$d=-a + \alpha_1$ and without loss of generality we can choose $cD_1-aD_2$ instead of the element
$cD_1+dD_2$.

Since $[D_2,bD_1]\in I'$ (as we have proved before) we see that
\[D_2(b)=\alpha_0 +\alpha_1 a + \dots + \alpha_{n-1} \frac{a^{n-1}}{(n-1)!}\]
for some $\alpha_i \in \mathbb{K}$.
Put $v=b-\alpha_0 a-\alpha_1 \frac{a^2}{2!} - \dots - \alpha_{n-1} \frac{a^{n}}{n!}$.
Then $D_1(v)=D_1(b)=-1, D_2(v)=0$.
Substracting the element $(\alpha_0 a+\alpha_1 \frac{a^2}{2!} + \dots + \alpha_{n-2} \frac{a^{n-1}}{(n-1)!})D_1 \in I'$
from the element $bD_1$ we can assume without loss of generality that $b=v - \alpha _{n-1} \frac{a^n}{n!}$
for some $\alpha _{n-1}  \in \mathbb{K}$. Then $D_1(b)=-1$, $D_2(b)=\alpha _{n-1} \frac{a^{n-1}}{(n-1)!}$.
Further, recall that for the basic element $cD_1-aD_2$ we have $D_1(c)=\beta_1 \in \mathbb{K}$.

Rewriting  the relation \ref{comd2} in the form
$[cD_{1}-aD_{2}, D_{2}]=D_{2}+r_{2}D_{1}$ we  obtain that
\[D_2(c)=\gamma_0+\gamma_1 a + \dots + \gamma_{n-1} \frac{a^{n-1}}{(n-1)!}   \ \ \mbox{for some} \ \  \gamma _{i}\in \mathbb K,  \  i=1, \ldots  ,n-1. \ \  \]
Substracting the element $(\gamma_0 a+\gamma_1 \frac{a^2}{2!} + \dots + \gamma_{n-2} \frac{a^{n-1}}{(n-1)!})D_1 \in I'$
from the element $cD_1-aD_2$ we may assume without loss of generality that $D_2(c)=\gamma_{n-1} \frac{a^{n-1}}{(n-1)!}$.
Suppose that $\beta_1=D_1(c) \neq 0$. Since $D_1(\beta_1^{-1}c+v -\beta_1^{-1}\gamma_{n-1} \frac{a^{n}}{n!})=0$
and $D_2(\beta_1^{-1}c+v -\beta_1^{-1}\gamma_{n-1} \frac{a^{n}}{n!})=
\beta_1^{-1}\gamma_{n-1} \frac{a^{n-1}}{(n-1)!}-\beta_1^{-1}\gamma_{n-1} \frac{a^{n-1}}{(n-1)!}=0$
we have by Lemma 1 that $\beta_1^{-1}c+v -\beta_1^{-1}\gamma_{n-1} \frac{a^{n}}{n!}= \nu$ for some $ \nu \in \mathbb{K}.$
Substracting the element $\nu D_1 \in I'$ from the element $cD_1+aD_2$ we may assume that $\nu =0$.
Then we obtain $c=-\beta_1 v+ \gamma_{n-1} \frac{a^{n}}{n!}$.
Denoting  $\alpha _{n-1}$ by $\alpha ,$ $\gamma_{n-1}$ by $\gamma$ and $\beta _{1}$ by $\beta$
 we obtain a basis of $L$ of the form:
$$  \{ D_1, aD_1, \dots , \frac{a^{n-1}}{(n-1)!}D_1, (v-\alpha \frac{a^{n}}{n!})D_1, D_2,
(-\beta v + \gamma \frac{a^{n}}{n!}) D_1-aD_2)\}$$
(here $D_{1}(a)=0, D_{1}(v)=-1, D_{2}(a)=1, D_{2}(v)=0$).
Now suppose that  $\beta =D_1(c)=0$. Since $D_2(c)=\gamma  \frac{a^{n-1}}{(n-1)!}$
we see that for the element $c_1=c-\gamma  \frac{a^{n}}{n!}$ it holds $D_1(c)=\beta=0$,
$D_2(c)=0$. So by Lemma  \ref{basic} we obtain $c-\gamma  \frac{a^{n}}{n!} = \nu_2$ for some  $\nu _{2}  \in \mathbb{K}$.
Substracting the element $\nu_2 D_1$ from $cD_1 + aD_2$ we may assume that $\nu_2=0$.
So we have that $c=\gamma  \frac{a^{n}}{n!}$ i.e. the basis of $L$ is of the same form as in case $\beta \not =0.
$

Now consider the product
$ [(v-\alpha a^{n}/n1)D_{1}, (\beta v+\gamma a^{n}/n!)D_{1}-aD_{2}].$ This product equals to $(-\alpha  \beta +\gamma +n\alpha )D_{1}$ and belongs to $I'.$  Hence $-\alpha  \beta +\gamma +n\alpha  =0$ and $\gamma =\alpha (\beta -n).$  We see that $L$ is of type 3 of Lemma.
\end{proof}
\begin{remark}\label{rem3}
There exist  realizations for all types of Lie algebras from Lemma \ref{solv2}:

1. $ D_{1}=\frac{\partial}{\partial x}, D_{2}=\frac{\partial}{\partial y}, $ $ a=y, b=-x$

2. $ D_{1}=\frac{\partial}{\partial x}, D_{2}=\beta y\frac{\partial}{\partial y}-x\frac{\partial}{\partial x}, $ $ a=y, b=-x, a_{i}=y^{m_{i}},.$

3. $ D_{1}=\frac{\partial}{\partial x}, D_{2}=\frac{\partial}{\partial y}, $ $ a=y, f=-x$

\end{remark}
The next three corollaries can be easily proved by using results of Lemmas \ref{rank1}, \ref{solv1} and \ref{solv2}.

\begin{corollary}\label{nilpotent}
Let $L$ be a finite dimensional nilpotent subalgebra of $\widetilde{W_2}(\mathbb{K}).$ Then there exist elements $D_{1}, D_{2}\in L$ linearly independent over $R$ such that $L$ is one of the following algebras:

{\rm (1)}  $L=\langle D_{1}, a_{1}D_{1}, \ldots, a_{n}D_{1}\rangle, $ for some $a_{i}\in R$  such that $D_{1}(a_{i})=0 , i=1, \ldots , n.$

{\rm (2)} $L=\langle D_{1}, D_{2}\rangle , [D_{1}, D_{2}]=0.$

{\rm (3)}  $L=\langle D_{1}, aD_{1}, \ldots , (a^{n}/n!)D_{1}, D_{2}\rangle $ for some $a\in R$ such that $ D_{1}(a)=0, D_{2}(a)=1,$ $[D_{1}, D_{2}]=0.$
\end{corollary}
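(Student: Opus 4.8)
\textbf{Proof proposal for Corollary \ref{nilpotent}.}
The plan is to derive the three cases directly from the structure theorems already proved for solvable subalgebras of rank $1$ (Lemma \ref{rank1}) and of rank $2$ (Lemmas \ref{solv1} and \ref{solv2}), using that a nilpotent Lie algebra is in particular solvable and has stronger constraints on its adjoint operators. First I would split according to the rank of $L$ over $R$. If $\operatorname{rank}_R L = 1$, then $L$ falls under Lemma \ref{rank1}; since $L$ is nilpotent it cannot be metabelian of the indicated nonabelian form (in type (2) there the element $bD_1$ acts on $D_1$ as a nonzero scalar, contradicting nilpotency by, e.g., Engel's theorem applied to $\ad$ on $L$), and it cannot be $\simeq sl_2(\mathbb K)$; hence $L$ is abelian, which is case (1) of the corollary.

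The substantive part is the rank $2$ case. Here $L$ is a finite dimensional solvable subalgebra of rank $2$, so I may pick a one-dimensional ideal $\langle D_1\rangle$ (it exists because $L$ is solvable and finite dimensional, by Lie's theorem applied to $\ad$), form $I = RD_1 \cap L$, and invoke Lemma \ref{solv1} if $I$ is abelian or Lemma \ref{solv2} if $I$ is nonabelian. I then go type by type and eliminate everything incompatible with nilpotency. In Lemma \ref{solv1}: type (1) with $\lambda = 1$ is excluded because $\ad D_2$ has nonzero eigenvalue $1$ on $\langle D_1\rangle$; type (1) with $\lambda = 0$ survives and is exactly case (3) of the corollary (and its degenerate $n=0$ instance gives case (2)); type (2) is excluded since $[D_2,D_1]=D_1$; type (3) requires $[D_2,D_1]=0$ but one checks $\ad D_2$ is not nilpotent on the span of $D_1,\dots,(a^n/n!)D_1,bD_1+aD_2$ unless the algebra degenerates into type (3) of the corollary — more precisely, the element $bD_1+aD_2$ together with $D_2$ generates a non-nilpotent action unless $\beta=\gamma=0$, and when $\beta=\gamma=0$ the extra generator $bD_1+aD_2$ lies in $RD_2\cap L$, making $L$ of rank-considerations reduce to case (3) of the corollary (one must be slightly careful here and note that $bD_1 + aD_2$ acting by $\ad$ on $D_1$ and on $D_2$ produces components that force nilpotency to fail unless these constants vanish, and then re-examine the resulting algebra). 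Similarly in Lemma \ref{solv2}: types (1), (2), (3) there all contain an element $bD_1$ with $D_1(b)=-1$, and $\ad(bD_1)$ acts on $\langle D_1\rangle$ as multiplication by $-D_1(b) = 1 \neq 0$ — wait, more carefully $[bD_1, D_1] = -D_1(b)D_1 = D_1$ — so $\ad(bD_1)$ is not nilpotent on $L$, contradicting nilpotency; hence no type from Lemma \ref{solv2} can occur. This leaves precisely cases (1), (2), (3) of the corollary.

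The main obstacle I anticipate is the careful elimination within type (3) of Lemma \ref{solv1}: unlike the other types, its defining relations already include $[D_2,D_1]=0$, so nilpotency is not ruled out by an obvious nonzero eigenvalue, and one must actually compute the action of $\ad(bD_1+aD_2)$ on a spanning set of $L$ and check when it can fail to be nilpotent — the key computation being $[bD_1+aD_2, D_1] = D_1(b)D_1 = \beta D_1$ and $[bD_1+aD_2, (a^k/k!)D_1]$, which together with the nilpotency requirement (Engel) forces $\beta = 0$ and then forces $\gamma = 0$ via the relation $D_2(b) = (n+1)\gamma a^n$ and the requirement that $\ad D_2$ be nilpotent on the larger space; once both constants vanish the sixth generator becomes $aD_2 \in RD_2 \cap L$ and the whole algebra collapses to the form in case (3) of the corollary. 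Everything else is a routine check that the surviving algebras genuinely are nilpotent, which follows because in cases (1) and (3) every adjoint operator is strictly upper triangular in the displayed basis, and in case (2) the algebra is abelian.
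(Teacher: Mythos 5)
Your overall strategy is exactly what the paper intends: the text states that Corollary \ref{nilpotent} ``can be easily proved by using results of Lemmas \ref{rank1}, \ref{solv1} and \ref{solv2}'', and your rank split plus type-by-type elimination is that argument. The rank~$1$ case, the exclusion of types (1) with $\lambda=1$ and (2) of Lemma \ref{solv1}, and the exclusion of all of Lemma \ref{solv2} via $[bD_1,D_1]=-D_1(b)D_1=D_1$ are all correct.

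There is, however, a genuine error in your treatment of type (3) of Lemma \ref{solv1}. You claim that when $\beta=\gamma=0$ the algebra becomes nilpotent and ``collapses'' to case (3) of the corollary. It does not. Compute, using Lemma \ref{basic}, $[bD_1+aD_2,\,D_2]=-D_2(b)D_1-D_2(a)D_2=-(n+1)\gamma a^nD_1-D_2$; this holds for \emph{all} values of $\beta,\gamma$, including zero. Hence modulo $I=RD_1\cap L$ the quotient $L/I$ is spanned by $D_2+I$ and $(bD_1+aD_2)+I$ with $[(bD_1+aD_2)+I,\,D_2+I]=-D_2+I$, i.e.\ $L/I$ is the two-dimensional nonabelian Lie algebra, which is never nilpotent; since quotients of nilpotent algebras are nilpotent, $L$ cannot be nilpotent. (Concretely, when $\beta=\gamma=0$ one gets $b\in\mathbb K$ by Lemma \ref{basic}, so $aD_2\in L$ and $[aD_2,D_2]=-D_2$, an eigenvector with nonzero eigenvalue, contradicting Engel.) So type (3) of Lemma \ref{solv1} is excluded outright, not partially absorbed into case (3) of the corollary. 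The cleanest version of this step, which also disposes of type (3) of Lemma \ref{solv2} at a stroke, is to invoke Lemma \ref{ideals}(2): whenever $\dim L/I=2$ the quotient $L/I$ is nonabelian, hence not nilpotent, hence $L$ is not nilpotent; so for nilpotent $L$ of rank $2$ only the cases with $\dim L/I=1$ can occur. Your final list of cases is nevertheless correct, because corollary case (3) already arises from type (1) of Lemma \ref{solv1} with $\lambda=0$; only the justification for discarding type (3) needs to be repaired as above.
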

\begin{corollary}\label{decompose}
Let $L$ be a finite dimensional solvable subalgebra of $\widetilde{W_2}(\mathbb{K}).$
 If $L$  is nonabelian and decomposable into a direct sum of proper ideals, then $L=A\oplus B,$ where $A$ is a nonabelian ideal of dimension $2$ and $B$ is either a one-dimensional ideal or a  two-dimensional nonabelian ideal of $L.$
\end{corollary}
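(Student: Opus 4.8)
The statement is about a finite dimensional solvable subalgebra $L$ of $\widetilde{W_2}(\mathbb{K})$ that is nonabelian and decomposes as a direct sum of proper ideals; the goal is to show this decomposition must have the shape $L = A \oplus B$ with $A$ a nonabelian two-dimensional ideal and $B$ either one-dimensional or a nonabelian two-dimensional ideal. The natural approach is to run through the classification already established in Lemmas \ref{rank1}, \ref{solv1} and \ref{solv2}, since every finite dimensional solvable $L$ is covered there (rank $1$ by Lemma \ref{rank1}, rank $2$ by Lemmas \ref{solv1} and \ref{solv2}, after choosing a one-dimensional ideal $\langle D_1\rangle$, which exists by Lie's theorem, and setting $I = RD_1 \cap L$).

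First I would dispose of the rank $1$ case. By Lemma \ref{rank1} a nonabelian rank-$1$ algebra is either metabelian of the form $\langle b\rangle \rightthreetimes A$ with $A$ abelian (type 2) or isomorphic to $sl_2(\mathbb{K})$ (type 3). The $sl_2$ case is simple and indecomposable, so it is excluded by hypothesis. In the metabelian type-2 case, if $L = M \oplus N$ with $M, N$ nonzero ideals, then $[L,L]$ is one-dimensional (spanned by $D_1$) and lies in exactly one summand, say $M$; the other summand $N$ is then central, hence abelian, and $[M,M] = [L,L]$ is one-dimensional. But a solvable algebra with one-dimensional commutator that is a direct summand ideal containing its own commutator must itself be, after extracting any further central direct summands, two-dimensional nonabelian — one reduces $M$ the same way until it is exactly $\langle b D_1, D_1\rangle$. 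Packaging the residual central pieces together with $N$ gives $B$, but one must check $\dim B \le 2$; this is where one needs the structural input, and I return to it below.

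Next, the rank $2$ case: here I would go type by type through Lemmas \ref{solv1} and \ref{solv2}, each of which has exactly three types. The key observation is that in every one of these algebras the ideal $I = RD_1\cap L$ is "large" and "tightly coupled": in the abelian-$I$ cases (Lemma \ref{solv1}), types 1 and 3 have $I$ generated by $D_1$ together with the divided powers $(a^k/k!)D_1$, on which $\ad D_2$ acts as a single Jordan block — such a module admits no nontrivial ideal-direct-sum splitting — while type 2 has $\ad D_2$ acting on $I$ with distinct eigenvalues $\beta m_i$, and a splitting $L = M\oplus N$ would force $D_2$ (up to $I$) into one summand and split the eigenspaces between the summands, but the bracket relations $[D_2, a_iD_1] = \beta m_i a_i D_1$ tie each $a_iD_1$ to $D_2$, so all of $I$ lands in the summand containing $D_2$, leaving the other summand zero unless $\dim L/I$ contributes a separate central or two-dimensional piece; one checks by inspection of the explicit bracket tables that this never produces a proper splitting except the trivial reshuffling. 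The nonabelian-$I$ cases (Lemma \ref{solv2}) are handled identically, noting $I' = [I,I]$ is one-dimensional and $\ad(bD_1)$ acts as the identity on it, which by Lemma \ref{Lie_conseq} forces $bD_1$ into the center modulo $I'$ and pins down where any direct summand can sit. The conclusion in all rank-$2$ types is that $L$ is indecomposable unless it already visibly contains a two-dimensional nonabelian direct summand $A = \langle bD_1, D_1\rangle$ (or $\langle D_2, D_1\rangle$ with $[D_2,D_1]=D_1$), and then the complement $B$ is read off and seen to have dimension $\le 2$ and to be abelian-or-two-dimensional-nonabelian by the same case analysis.

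The main obstacle I anticipate is the bookkeeping needed to bound $\dim B \le 2$ uniformly: a priori a solvable $L$ could split off a two-dimensional nonabelian $A$ and leave a large abelian or metabelian $B$, and ruling this out requires using that $L$ has rank at most $2$ over $R$ together with Lemma \ref{ideals}(3) (for $\dim L \ge 5$ the rank-$1$ ideals all sit inside a single $I$) to prevent two "independent" large pieces. Concretely, if $A$ and $B$ are both nonabelian with $\dim B \ge 3$, then each contributes a rank-$1$ ideal of the ambient algebra, these lie in a common $RD_1\cap L$ by Lemma \ref{ideals}(3), but then $A \cap B \ne 0$, contradicting directness. For $\dim B = 3$ nonabelian one checks against the three nilpotent/solvable types of small dimension directly. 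So the argument is: reduce to the finitely many types from the preceding lemmas, in each type either exhibit indecomposability or the claimed $A\oplus B$ shape, and use Lemma \ref{ideals}(3) as the global constraint forbidding two independent large ideals. I do not expect any new computation beyond re-reading the bracket relations already written down in Lemmas \ref{rank1}, \ref{solv1}, \ref{solv2}.
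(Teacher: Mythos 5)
The paper offers no actual proof of this corollary --- it is dismissed with the remark that it ``can be easily proved'' from Lemmas \ref{rank1}, \ref{solv1} and \ref{solv2} --- so your strategy of running through that classification type by type is exactly the intended one; the comparison must therefore be with whether your case check actually closes. It does not, in two places. First, your appeal to Lemma \ref{ideals}(3) is a non sequitur: if $A$ and $B$ each contain a one-dimensional ideal of $L$ and both of these lie in $I=RD_1\cap L$, that only says both are $R$-multiples of $D_1$; since $I$ may have dimension $\geq 2$ it can meet $A$ and $B$ in different lines, so no contradiction with $A\cap B=0$ follows. Second, in the rank-one metabelian case $[L,L]$ is the whole abelian part (of arbitrary dimension), not one-dimensional as you assert; the correct route there is that $\dim L/L'=1$ forces one summand of any splitting to equal its own derived algebra, hence to be abelian, hence central, hence zero, since that algebra has trivial center.

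The more serious gap is your claim that type 2 of Lemma \ref{solv1} ``never produces a proper splitting except the trivial reshuffling.'' Nothing in that type prevents an eigenvalue $1+\beta m_i$ of $\ad D_2$ on $I$ from vanishing, in which case $a_iD_1$ is central and splits off. Concretely, $L=\langle \partial_x,\ y\partial_x,\ y^2\partial_x,\ -x\partial_x-y\partial_y\rangle$ (the realization of type 2 with $\beta=-1$, $m_1=1$, $m_2=2$) is a four-dimensional solvable subalgebra with one-dimensional center $\langle y\partial_x\rangle$, decomposing as $\langle y\partial_x\rangle\oplus\langle \partial_x,\ y^2\partial_x,\ -x\partial_x-y\partial_y\rangle$ with a three-dimensional second summand; it admits no decomposition of the shape demanded by the statement, since a $2+2$ splitting would force the center to have dimension $0$ or $2$. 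So an honest case check does not deliver the corollary as literally stated; what it does deliver --- and what the paper actually uses later in Lemmas \ref{semisimple} and \ref{unsolvable} --- is that an abelian direct summand must be one-dimensional (no type in the classification has center of dimension $\geq 2$) and that a direct sum of nonabelian ideals can have only two summands, each two-dimensional. Any complete write-up should either prove these weaker assertions directly or restate the corollary; your proposal, by asserting indecomposability where it fails, papers over exactly the problematic case.
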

\begin{corollary}\label{quotient}
Let $L$ be a finite dimensional solvable subalgebra of $\widetilde{W_2}(\mathbb{K}).$ If $L$ is nonabelian, then $\dim L/L'\leq 2.$
\end{corollary}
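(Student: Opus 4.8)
The plan is to reduce Corollary \ref{quotient} to the structural lemmas already proved. Let $L$ be a finite dimensional nonabelian solvable subalgebra of $\widetilde{W_2}(\mathbb K)$. First I would dispose of the rank $1$ case: by Lemma \ref{rank1} such an $L$ is abelian, metabelian of type (2), or isomorphic to $sl_2(\mathbb K)$; the last is not solvable, the first is excluded by hypothesis, and for type (2) the derived subalgebra is $\langle a_1D_1,\dots,a_{n-1}D_1,D_1\rangle$ (since $[bD_1,a_iD_1]=-D_1(b)a_iD_1=a_iD_1$ and $[bD_1,D_1]=D_1$), so $\dim L/L'=1\le 2$. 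Hence we may assume $L$ has rank $2$ over $R$.

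Next, since $L$ is solvable and nonzero it has a one-dimensional ideal $\langle D_1\rangle$ (by Lie's theorem applied to the adjoint representation, or simply because the last nonzero term of the derived series is abelian and $\ad$-stable). Set $I=RD_1\cap L$, an ideal of $L$ by Lemma \ref{ideals}(1), with $\dim L/I\le 2$. The ideal $I$ is itself a rank-$1$ subalgebra, so by Lemma \ref{rank1} it is either abelian or metabelian of type (2) (it cannot be $sl_2$, being solvable). Thus $L$ falls under the hypotheses of either Lemma \ref{solv1} (when $I$ is abelian) or Lemma \ref{solv2} (when $I$ is nonabelian), and $L$ is one of the explicitly listed algebras in those lemmas.

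The main step is then a direct inspection of the derived subalgebras of the six normal forms. The expected obstacle—really a bookkeeping chore rather than a genuine difficulty—is to verify in each case that the span of all brackets of basis elements has codimension at most $2$. For Lemma \ref{solv1}: in type (1) with $\lambda=1$ one computes $[D_2,D_1]=D_1$ and $[D_2,(a^i/i!)D_1]=(a^i/i!)D_1+(a^{i-1}/(i-1)!)D_1$, so $L'\supseteq I$ and $\dim L/L'\le 1$ (and if $\lambda=0$ and $n\ge 1$ similarly $L'\supseteq\langle D_1,aD_1,\dots\rangle$ minus possibly one generator, while if $n=0$, $\lambda=0$ then $L$ is abelian, contrary to hypothesis); in type (2) the eigenvalue relations $[D_2,a_iD_1]=(\beta m_i+1)a_iD_1$ with $[D_2,D_1]=D_1$ force $L'\supseteq\langle D_1,a_1D_1,\dots,a_nD_1\rangle=I$, giving $\dim L/L'\le 1$; in type (3) one checks $[D_2,D_1]=0$, $[D_2,bD_1+aD_2]=\dots$ and $[D_1,bD_1+aD_2]=\beta D_1$, and a short computation shows $L'$ omits at most the two classes of $D_2$ and $bD_1+aD_2$. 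For Lemma \ref{solv2} the three cases are handled the same way: in type (1) the nilpotent action of $\ad D_2$ on $I'$ together with $[bD_1,a_iD_1]=a_iD_1$ shows $L'\supseteq I'$ so $\dim L/L'\le 2$; in type (2) the relation $[D_2,D_1]=D_1$ again puts all the $a_iD_1$ and $D_1$ into $L'$, and $[D_2,bD_1]=0$ does not matter since $bD_1=[D_2',\cdot]$ is not needed—one gets $\dim L/L'\le 2$; in type (3) one uses $[D_1,(-\beta v+\gamma a^n/n!)D_1-aD_2]=\beta D_1$ and the nilpotency on $I'$ to see that $L'$ misses at most the classes of $D_2$ and the last generator. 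In every case the bound $\dim L/L'\le 2$ holds, completing the proof.

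In writing this up I would not reproduce all six bracket computations in full; instead I would remark that each normal form in Lemmas \ref{solv1} and \ref{solv2} visibly has $L'$ containing $I$ or $I'$ together with all but at most two of the remaining basis vectors, and leave the routine verification to the reader, exactly as the phrase ``can be easily proved by using results of Lemmas \ref{rank1}, \ref{solv1} and \ref{solv2}'' anticipates.
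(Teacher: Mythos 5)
Your proposal is correct and follows exactly the route the paper intends: the paper offers no argument beyond citing Lemmas \ref{rank1}, \ref{solv1} and \ref{solv2}, and your case-by-case inspection of the normal forms listed there is precisely the verification it leaves to the reader. One tiny overstatement: in type (2) of Lemma \ref{solv1} the eigenvalue $1+\beta m_i$ may vanish for (at most) one index $i$, so $L'$ can omit that $a_iD_1$ and the codimension there is $2$ rather than $1$ --- which still gives the required bound.
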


\section{Nonsolvable subalgebras of $\widetilde{W_2}(\mathbb K)$}

\begin{lemma} \label{semisimple}
If $L$ is a finite dimensional  semisimple subalgebra of the Lie algebra
$\widetilde{W_2}(\mathbb K),$  then $L$ is isomorphic to   $sl_{2}(\mathbb K)$ or
$sl_{3}(\mathbb K),$ or $sl_{2}(\mathbb K)\oplus sl_{2}(\mathbb K).$
\end{lemma}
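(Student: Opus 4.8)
The plan is to first bound the rank of $L$ as a Lie algebra and then to eliminate the two surviving candidates using the descriptions of nilpotent and solvable subalgebras obtained above. I would begin by showing $\operatorname{rank} L\le 2$: pick a Borel (maximal solvable) subalgebra $B$ of $L$; since $L\ne 0$ is semisimple, $B$ is nonabelian and $B/[B,B]$ is a Cartan subalgebra of $L$, so $\dim B/[B,B]=\operatorname{rank} L$, and by Corollary~\ref{quotient} the left-hand side is at most $2$. Hence $L$ is isomorphic to one of $sl_{2}(\mathbb K)$, $sl_{2}(\mathbb K)\oplus sl_{2}(\mathbb K)$, $sl_{3}(\mathbb K)$, $sp_{4}(\mathbb K)\cong so_{5}(\mathbb K)$ or $G_{2}$, and it remains to exclude the last two. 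If $L$ has rank $1$ over $R$, then Lemma~\ref{rank1} already forces $L\cong sl_{2}(\mathbb K)$, so from now on $L$ has rank $2$ over $R$.

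The case $L\cong G_{2}$ is disposed of at once. The derived algebra $N=[B,B]$ of a Borel subalgebra of $G_{2}$ is a six-dimensional nonabelian nilpotent subalgebra of $\widetilde{W_2}(\mathbb K)$, but it is not isomorphic to any algebra $\langle D_{1},aD_{1},\dots,(a^{n}/n!)D_{1},D_{2}\rangle$ of Corollary~\ref{nilpotent}: for instance $N$ has no abelian ideal of codimension $1$, whereas each such model algebra does. This contradicts Corollary~\ref{nilpotent}.

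The case $L\cong sp_{4}(\mathbb K)$ is the main obstacle, and it cannot be handled the same way: a Borel subalgebra of $sp_{4}(\mathbb K)$ does occur inside $\widetilde{W_2}(\mathbb K)$ (for instance as $\langle\partial_{x},\partial_{y},x\partial_{x},y\partial_{x},\frac{1}{2}y^{2}\partial_{x},y\partial_{y}\rangle$), so one is forced to use the negative root vectors. Here I would pass to a maximal parabolic subalgebra $P\subset sp_{4}(\mathbb K)$, whose Levi factor contains a copy of $sl_{2}(\mathbb K)$ and whose nilpotent radical is a three-dimensional abelian, respectively Heisenberg, ideal, and then confront the bracket relations among the root vectors with the joint action on $R$ of an $sl_{2}$-triple and a maximal torus: by Lemma~\ref{properties}, any two root vectors that are forced to be $R$-proportional to a common vector field have eigenvalues (for the torus, or for a Cartan element) whose ratios must be rational and, tracing through the parabolic, mutually incompatible. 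Organizing this case analysis, and checking that the root vectors really are so constrained rather than conspiring to stay inside a finite-dimensional algebra, is the technically heaviest step; this is where Lemmas~\ref{rank1}, \ref{solv1} and~\ref{solv2} do the real work, and the upshot is that a finite-dimensional semisimple subalgebra of $\widetilde{W_2}(\mathbb K)$ has dimension at most $8$, which with $\operatorname{rank} L\le 2$ leaves precisely $sl_{2}(\mathbb K)$, $sl_{2}(\mathbb K)\oplus sl_{2}(\mathbb K)$ and $sl_{3}(\mathbb K)$.
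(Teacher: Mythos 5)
Your rank bound and your elimination of $G_{2}$ are sound and essentially coincide with the paper's own argument: the paper likewise applies Corollary~\ref{quotient} to the Borel subalgebra to get $\dim H\le 2$ (after first settling the case of abelian $N_{+}$ via Corollary~\ref{decompose}, which yields $sl_{2}(\mathbb K)$ or $sl_{2}(\mathbb K)\oplus sl_{2}(\mathbb K)$), and it excludes $G_{2}$ by observing that $[N_{+},N_{+}]$ is nonabelian while every algebra of Corollary~\ref{nilpotent} has abelian derived subalgebra; your ``no abelian ideal of codimension one'' criterion is an equivalent formulation of the same obstruction. Your observation that a Borel subalgebra of $sp_{4}(\mathbb K)$ really does embed into $\widetilde{W_2}(\mathbb K)$ is also correct (your example is an instance of type~3 of Lemma~\ref{solv2} with $n=3$ and $\alpha=\beta=\gamma=0$), so you are right that the $B_{2}$ case cannot be decided inside a single Borel subalgebra.

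The $B_{2}$ case, however, is precisely where your proposal stops being a proof, and the mechanism you sketch would not work. The eigenvalues of a Cartan element on root vectors are the values of the roots, so every ratio that Lemma~\ref{properties} constrains to be rational is automatically a rational integer for any root system; that lemma is satisfied, not violated, and no amount of ``tracing through the parabolic'' will extract a contradiction from rationality alone. Moreover, the nilradicals of the two maximal parabolics (Heisenberg and three-dimensional abelian) are both realizable shapes under Corollary~\ref{nilpotent} and Lemma~\ref{rank1}, so the parabolic subalgebras by themselves impose no obstruction either. The idea that actually kills $B_{2}$ in the paper is absent from your sketch: apply Corollary~\ref{nilpotent} to the maximal nilpotent subalgebras attached to \emph{several Weyl-conjugate positive systems}. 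Since $N_{+}=\langle e_{\alpha},e_{\beta},e_{\alpha+\beta},e_{\alpha+2\beta}\rangle$ is a four-dimensional nonabelian nilpotent algebra, it must be of type~3 of Corollary~\ref{nilpotent}, whose derived subalgebra lies in $RD_{1}$; this forces $e_{\alpha+\beta}\in Re_{\alpha+2\beta}$. Repeating the argument for the nilpotent subalgebras corresponding to the positive systems $\sigma_{\alpha}(\Delta^{+})$ and $\sigma_{\beta}(\Delta^{+})$ (these mix positive and negative root vectors, which is where the rest of $L$ finally enters) forces $e_{\beta}$ and $e_{\alpha}$ into $Re_{\alpha+2\beta}$ as well, so $N_{+}$ has rank $1$ over $R$ and is therefore abelian --- contradicting the nonabelianity of $N_{+}$ in type $B_{2}$. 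Without this step your claimed conclusion that $\dim L\le 8$ is asserted rather than derived, so the proposal has a genuine gap at its central point.
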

\begin{proof}
If $L$ is of rank $1$ (as a system of vectors) over $R$, then $L \simeq sl_2(\mathbb{K})$ by Lemma \ref{rank1}. So, we can assume
 that $L$ is of rank $2$ over $R$. Fix a Cartan subalgebra $H$ of the algebra $L$,
 a basis $\pi$ of the system $\Delta$ of roots which correspond to $H$ and let $\Delta^+$ be the
 set of positive roots relatively to the ordering on $\Delta$. Consider the triangular decomposition
 \[L=N_+ + H + N_-,  \  N_+=\oplus_{\alpha_i >0} L_{\alpha_i},  \  N_-=\oplus_{\alpha_i <0} L_{\alpha_i}\]
 and the Borel subalgebra $B=H + N_+$ of $L.$  If the subalgebra $N_+$ is abelian, then $L$ is a direct sum
 $L=L_1 \oplus \dots \oplus L_k$ of ideals isomorphic to $sl_2(\mathbb{K})$ (see, for example \cite{Hum}). Then $B$ is a direct sum
 $B=B_1 \oplus \dots \oplus B_k$ of Borel subalgebras of $L_i \simeq sl_2 (\mathbb{K})$
 and using Corollary \ref{decompose}  we see that either $L =L_1 \simeq sl_2 (\mathbb{K})$
 or $L =L_1 \oplus L_2 \simeq sl_2 (\mathbb{K}) \oplus sl_2 (\mathbb{K})$.

 Now, let the subalgebra $N_+$ be nonabelian. Since $N_+$ is nilpotent  it is indecomposable
 into a direct sum of nonzero ideals by Corollary \ref{nilpotent}. But then the algebra $L$ is also indecomposable into
 a direct sum of proper ideals and hence is simple. By Corollary \ref{quotient} we have relations:
 \[\dim B/B'=\dim B/N=\dim H \leq 2.\]
 Therefore, if $N_+$ is nonabelian, then $\dim H=2$ and $L$ is a simple Lie algebra
 of one of the types $A_2$, $B_2$ or $G_2$.
 First suppose that  $L$ is of type $G_{2}.$ Then the subalgebra  $N_+$ from  its triangular decomposition has nonabelian  derived subalgebra  $[N_+,N_+].$ The latter is impossible (see Corollary \ref{nilpotent}) and hence $L$ cannot be of type $G_{2}.$

 Further, let us show that $L$ is not of type $B_{2}.$
 Fix a Cartan subalgebra $H$ of $L$ and a basis $\{\alpha , \beta \}$  of the  root system $\Delta .$  Then the subalgebra
 $N_{+}$ has the basis $\{ e_{\alpha},  \ e_{\beta}, \  e_{\alpha +\beta}, \ e_{\alpha +2\beta}\}.$  It follows
 from Corollary \ref{nilpotent}  that
 $e_{\alpha +\beta}=f\cdot e_{\alpha +2\beta}$ for some element $f\in R.$  Consider the element $\sigma _{\alpha}$
 of the Weyl group of the root system $\Delta$  acting by the rule $\sigma _{\alpha}(\gamma )=\gamma -
 \frac{2(\gamma , \alpha )}{(\alpha , \alpha )}\alpha ,$ where $\gamma $ is an arbitrary root from $\Delta .$
Then $\{ -\alpha , \beta +\alpha . \beta , \alpha +2\beta \}$ are  positive roots relatively to the new basis $\{ \sigma _{\alpha}(\alpha ), \sigma _{\alpha}(\beta )\}.$ The subalgebra $\langle e_{-\alpha }, e_{\beta +\alpha } , e_{\beta} ,
e_{\alpha +2\beta }\rangle$ is nilpotent and by Corollary \ref{nilpotent}  it holds $e_{\beta }=g\cdot e_{\alpha +2\beta}$ for some
$g\in R.$ Analogously one can show that $e_{\alpha}=h\cdot e_{\alpha +2\beta}$ for some $h\in R.$
Three relations with coefficients $f, g, h$  obtained above imply that all elements from the basis of $N_{+}$ are multiple to one of
them and hence the subalgebra $N_{+}$ is abelian by Lemma \ref{rank1}.  This is impossible and obtained contradiction shows that $L$ is not of type $B_{2}.$  Thus, $L$ is of type $A_{2}.$
\end{proof}

\begin{lemma}\label{Levi}
Let $L$ be a finite dimensional nonsolvable subalgebra of $\widetilde{W_2}(\mathbb K)$ whose Levi factor
 is  either of type  $A_{2}$ or of type  $A_{1}\times A_{1}.$ Then $L$ is semisimple of type
$A_{2}$ or   of type $A_{1}\times A_{1}$ respectively.
\end{lemma}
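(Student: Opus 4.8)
We must show that if the Levi factor $S$ of $L$ is of type $A_2$ or $A_1\times A_1$, then the nilradical (solvable radical) is forced to be zero, so $L=S$. The natural route is to study the adjoint action of $S$ on the solvable radical $N=\operatorname{rad}(L)$, which is a finite-dimensional $S$-module, and to extract a contradiction from the realization of $L$ inside $\widetilde{W_2}(\mathbb K)$, where everything has rank at most $2$ over $R$ by the remarks in the Preliminaries.

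First I would invoke Lemma \ref{semisimple}: since $S$ is a semisimple subalgebra of $\widetilde{W_2}(\mathbb K)$ of one of the two listed types, it is of rank $2$ over $R$ (an $A_1\times A_1$ or $A_2$ cannot sit in rank $1$, by Lemma \ref{rank1}). Hence $S$ itself already spans $\widetilde{W_2}(\mathbb K)$ over $R$: one can pick two root vectors $D_1,D_2\in S$ linearly independent over $R$, and then every element of $\widetilde{W_2}(\mathbb K)$, in particular every element of $N$, is an $R$-combination of $D_1$ and $D_2$. Now suppose $N\neq 0$ and pick a nonzero $D\in N$ lying in a minimal (hence irreducible) $S$-submodule $M\subseteq N$ under $\operatorname{ad}$. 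The key point is that $M$ is a nonzero ideal-like $S$-module sitting inside the two-dimensional (over $R$) space $\widetilde{W_2}(\mathbb K)$, and $[S,M]\subseteq M$.

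The main step is to rule out every irreducible $S$-module $M$ that could occur. Here I would use the rank constraint together with Lemma \ref{basic}. Writing $M$ in terms of the $R$-basis coming from $S$, the bracket relations $[s,m]\in M$ for all $s\in S$ translate (via Lemma \ref{basic}(1)) into a system of first-order differential relations on the rational coefficient functions; the fact that there are no nonconstant common ``constants'' for two independent derivations (Lemma \ref{basic}(2)) together with the $\mathbb K(t)$-lemmas (Lemma \ref{rat_mulem}) should force the coefficients to be constants, i.e. $M\subseteq S$. But $S$ is semisimple, so $S\cap N=0$, a contradiction; hence $N=0$. Concretely, one compares the weights of $M$ as an $S$-module with the weights appearing in $\operatorname{ad}$ acting on $\widetilde{W_2}(\mathbb K)\cong S$-module-wise on the two-dimensional $R$-span, and shows that a Cartan element of $S$ must act on any candidate $M$ with at most two distinct weights matching those of the standard $2$-dimensional situation, which is incompatible with $M$ being a nonzero $A_2$- or $(A_1\times A_1)$-module not already inside $S$ — the abelian/nonabelian dichotomies of Lemmas \ref{solv1} and \ref{solv2} applied to the solvable subalgebras $\langle D_1\rangle\rightthreetimes(\text{relevant piece})$ then close the remaining low-dimensional cases.

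The hard part, I expect, will be the bookkeeping of which irreducible $S$-modules are a priori available: for $A_2$ one must exclude the adjoint, the two $3$-dimensional fundamentals, and all higher-dimensional irreducibles, and similarly for $A_1\times A_1$ one must exclude $V_m\boxtimes V_n$ for all $(m,n)\neq(0,0)$. The uniform mechanism for all of these is the rank-$2$ obstruction — any $S$-submodule of $\widetilde{W_2}(\mathbb K)$ is, over $R$, contained in the two-dimensional space spanned by fixed root vectors of $S$, so its $\mathbb K$-dimension is irrelevant but its $S$-module structure must be ``visible'' inside that $R$-span; combining this with Lemma \ref{properties} (distinct eigenvalues of $\operatorname{ad} D_2$ on an $R D_1$-line are forced to be rationally related, and Jordan blocks of size $>1$ are impossible) pins down that the only $S$-stable configurations are the semisimple ones, giving $L=S$.
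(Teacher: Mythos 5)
Your overall strategy---regard the solvable radical as a module over the Levi factor and derive a contradiction from the rank-$2$ constraint---is the right one, and it is also the paper's strategy. But the decisive step is missing, and the mechanism you propose in its place would not go through as written. You take an irreducible submodule $M$ of the radical and assert that the relations $[s,m]\in M$ ``should force'' the rational coefficients of elements of $M$ to be constants via Lemma \ref{basic} and Lemma \ref{rat_mulem}. Lemma \ref{basic}(2), however, applies only to an element annihilated by \emph{two} derivations independent over $R$; a general element of $M$ is annihilated by nothing, and $[s,m]\in M$ gives no vanishing at all, only a closed system. So there is no system of equations to which those lemmas apply, and the subsequent bookkeeping over all irreducible $A_2$- and $A_1\times A_1$-modules has no uniform engine behind it and is not carried out.

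The missing idea is to apply the argument not to an arbitrary element of $M$ but to a \emph{highest weight vector} $D$ of the radical $S$ viewed as a module over the Levi factor $L_1$: such a $D$ is annihilated by the entire nilpotent part $N_+$ of a triangular decomposition of $L_1$. One then only needs two elements $D_1,D_2\in N_+$ that commute and are linearly independent over $R$. For type $A_2$ the subalgebra $N_+$ is nilpotent and nonabelian, so Corollary \ref{nilpotent} supplies such a pair; for $A_1\times A_1$ one checks that some pair of commuting root vectors is independent over $R$, since otherwise the whole algebra would lie in $RD_1$, contradicting Lemma \ref{rank1}. Writing $D=aD_1+bD_2$, the relations $[D_1,D]=[D_2,D]=0$ together with $[D_1,D_2]=0$ give $D_1(a)=D_1(b)=D_2(a)=D_2(b)=0$ by Lemma \ref{basic}(1), hence $a,b\in\mathbb K$ by Lemma \ref{basic}(2), so $D\in L_1\cap S=0$ and therefore $S=0$. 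No enumeration of irreducible modules and no appeal to Lemma \ref{properties} or Lemma \ref{rat_mulem} is needed. Until the highest weight vector is identified as the element to which Lemma \ref{basic}(2) is applied, the proof has a genuine gap.
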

\begin{proof}
Let $S=S(L)$ be the solvable radical of $L.$ By Theorem of Levi-Maltsev $L=L_{1}\rightthreetimes S,$ where
$L_{1}$ is a Levi factor of $L.$ First suppose that $L_{1}$ is of type $A_{2}.$ Let us fix a Cartan subalgebra $H$
of $L_{1}$  and  the  root system $\Delta $ corresponding to $H.$  Consider the triangular decomposition
\begin{equation}\label{eq1}
 L=N_{-}+H+N_{+}
\end{equation}
of $L_{1}$ relatively to  $H$ and $ \Delta .$ Since the subalgebra $N_{+}$ is nonabelian (this follows from the multiplication law in algebras of  type $A_{2}$)  it contains by Corollary \ref{nilpotent} elements $D_{1}$ and $D_{2}$, linearly independent over $R $ such that $[D_{1}, D_{2}]=0.$
Consider $S$ as an $L_{1}$-module and take the older vector $D\in S$ relatively to the decompostion (\ref{eq1}). Then we have
\begin{equation}\label{eq2}
  [D_{1}, D]=0, \ \ [D_{2}, D]=0.
\end{equation}
If we write  $D=aD_{1}+bD_{2}$ for some $a, b \in R,$  then from the previous relation we get
 $$D_{1}(a)=0,  D_{1}(b)=0, D_{2}(a)=0 \ \  \mbox{and} \ \  D_{2}(b)=0.$$
 Lemma \ref{basic}  yields now that $a, b \in \mathbb K,$ i.e. $D\in L_{1}.$ As $L_{1}\cap S=0$ we obtain  $S=0$ and therefore $L=L_{1}$ is a simple Lie algebra of type $A_{2}.$

Let now $L_{1}$  be of type $A_{1}\times A_{1}.$ Write  $L_{1}=G_{1}\oplus G_{2},$ where $G_{i}\simeq sl_{2}(\mathbb K)$  and fix  Cartan subalgebras  $H_{1}\subset G_{1}, H_{2}\subset G_{2}.$
Consider any triangular decompositions $$ G_{1}=N_{1+}+H_{1}+N_{1-}, \  G_{2}=N_{2+}+H_{2}+N_{2-}$$ relatively to $H_{1}$ and $H_{2}.$  Take any nonzero element $D_{1}\in N_{1+}.$ Then at least one of the  subalgebras $N_{1-}, N_{2+}, N_{2-}$  contains a nonzero element $D_{2}$ such that $D_{1}$ and $D_{2}$ are linearly independent over $R.$
Really, in other case $H_{1}=[N_{1+}, N_{1-}]$ and $H_{2}=[N_{2+}, N_{2-}]$ lie also in $RD_{1}$ and therefore $L=G_{1}\oplus G_{2}\subset RD_{1}$  which is impossible by Lemma \ref{rank1}.
It is easily shown that the two-dimensional abelian subalgebra $N_{+}=\langle D_{1}, D_{2}\rangle$  is a part of triangular decomposition $ L=N_{+}+H+N_{-}$  of $L$ relatively to the Cartan subalgebra $H=H_{1}\oplus H_{2}.$
Choosing as above the older vector in $S$ relatively to $N_{+}$ and repeating the considerations from the case $L_{1}\simeq A_{2}$ we get $S=0,$ i.e. $L$ is semisimple of type $A_{1}\times A_{1}.$
\end{proof}

 \begin{lemma}\label{unsolvable}
 Let $L$ be a nonsolvable finite dimensional subalgebra of $\widetilde{W}_2(\mathbb{K})$.
 Then $L$ is isomorphic to one of the following algebras:

 $(1)$  $sl_3(\mathbb{K}).$

 $(2)$ $sl_2(\mathbb{K})$ or  $sl_2(\mathbb{K}) \oplus sl_2(\mathbb{K}).$

 $(3)$  $sl_2(\mathbb{K}) \rightthreetimes V_m$, where $V_m$ is the irreducible module
 over $sl_2(\mathbb{K})$ of dimension $m+1$, $m=0, 1, \dots .$

 $(4)$  $gl_2(\mathbb{K}) \rightthreetimes V_m$, where $V_m$ is the irreducible module
 over $gl_2(\mathbb{K})$ of dimension $m+1$, $m=0, 1, \dots$ and  nonzero central
 elements of $gl_2(\mathbb{K})$ act on $V_m$ as nonzero scalars.
 \end{lemma}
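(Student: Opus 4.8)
The plan is to apply the Levi--Maltsev decomposition $L=L_{1}\rightthreetimes S$, with $L_{1}$ a semisimple Levi factor and $S$ the solvable radical, and to reduce the problem to describing the $L_{1}$-module structure of $S$. By Lemma~\ref{semisimple} the only possibilities for $L_{1}$ are $sl_{2}(\mathbb K)$, $sl_{3}(\mathbb K)$ and $sl_{2}(\mathbb K)\oplus sl_{2}(\mathbb K)$. If $L_{1}$ is of type $A_{2}$ or $A_{1}\times A_{1}$ then Lemma~\ref{Levi} forces $S=0$, and the same holds if $L_{1}\simeq sl_{2}(\mathbb K)$ with $S=0$; in all these cases $L=L_{1}$, which is case $(1)$ or $(2)$. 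So from now on assume $L_{1}\simeq sl_{2}(\mathbb K)$ and $S\neq 0$, and fix a standard triple $e,h,f$ of $L_{1}$, writing $N_{+}=\langle e\rangle$, $N_{-}=\langle f\rangle$.

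Since $S$ is an ideal, $\ad$ makes $S$ into a finite dimensional $L_{1}$-module and the bracket $S\times S\to S$ is $L_{1}$-equivariant. Write $S=S_{0}\oplus S_{+}$, where $S_{0}$ is the sum of the trivial irreducible constituents of $S$ and $S_{+}$ the sum of the non-trivial ones; equivariance gives $[S_{0},S_{0}]\subseteq S_{0}$ and $[S_{0},S_{+}]\subseteq S_{+}$, the latter saying that $S_{0}$ acts on $S_{+}$ by $L_{1}$-module endomorphisms. The core of the proof is to establish: $(a)$ $S_{+}$ is irreducible or zero; $(b)$ $[S_{+},S_{+}]=0$; $(c)$ $\dim S_{0}\leq 2$, with $\dim S_{0}=2$ possible only when $L_{1}$ has rank $1$ over $R$, in which case $S_{0}$ is the non-abelian $2$-dimensional Lie algebra and $S_{+}=0$; $(d)$ if $\dim S_{0}=1$ and $S_{+}\neq 0$, then $S_{0}$ acts on $S_{+}$ by a nonzero scalar. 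Granting $(a)$--$(d)$ one reads off $L$: if $S_{0}=0$ then $L=sl_{2}(\mathbb K)\rightthreetimes V_{m}$ with abelian $V_{m}$ (case $(3)$); if $\dim S_{0}=1$ and $S_{+}=0$ then $L=sl_{2}(\mathbb K)\oplus\mathbb K=gl_{2}(\mathbb K)=sl_{2}(\mathbb K)\rightthreetimes V_{0}$ (case $(3)$, $m=0$); if $\dim S_{0}=1$ and $S_{+}\neq 0$ then $L_{1}\oplus S_{0}\simeq gl_{2}(\mathbb K)$ acts on the abelian ideal $S_{+}$ with its central line acting by a nonzero scalar, so $L=gl_{2}(\mathbb K)\rightthreetimes V_{m}$ (case $(4)$); and if $\dim S_{0}=2$ then $L=sl_{2}(\mathbb K)\oplus S_{0}$ is the direct sum of $sl_{2}(\mathbb K)$ and the non-abelian $2$-dimensional algebra, i.e. $gl_{2}(\mathbb K)\rightthreetimes V_{0}$ with the centre acting by a nonzero scalar (case $(4)$, $m=0$).

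All of $(a)$--$(d)$ come from the fact that $\widetilde{W_2}(\mathbb K)$ has rank $2$ over $R$, used through Corollaries~\ref{nilpotent}, \ref{decompose}, \ref{quotient} and Lemmas~\ref{rank1}, \ref{solv1}, \ref{solv2} (together with Lemma~\ref{rat_mulem}$(2)$, which yields that a solvable Lie algebra of rational vector fields in one variable is either abelian of dimension $\leq 1$ or the non-abelian $2$-dimensional one). For $(a)$--$(b)$: the ``non-negative part'' $P=N_{+}+\bigoplus_{i\geq 1}S_{i}$, where $S_{i}$ is the weight-$i$ subspace of $S$ for $\ad h$, is a nilpotent subalgebra because the weights occurring in it are bounded above and every inner bracket strictly raises the weight; by Corollary~\ref{nilpotent} its elements are $R$-proportional to at most two of them, and applying this to $e$ together with the highest-weight vectors of the non-trivial summands of $S$ (and their $\ad h$-images) produces $R$-linear dependences that force $S_{+}$ to have a single highest-weight direction, hence $(a)$, and pin down $[S_{+},S_{+}]$, hence $(b)$. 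For $(c)$: $S_{0}$ centralizes $L_{1}$, so $S_{0}\subseteq C:=C_{\widetilde{W_2}(\mathbb K)}(L_{1})$; one checks that $S_{0}$ is a solvable subalgebra of $C$, that when $L_{1}$ has rank $2$ over $R$ the algebra $C$ has dimension $\leq 1$, and that when $L_{1}$ has rank $1$ over $R$ (Lemma~\ref{rank1}$(3)$) $C$ is isomorphic to a Lie algebra of rational vector fields in one variable, so $\dim S_{0}\leq 2$ with equality only in the latter situation and only for $S_{0}$ non-abelian; the vanishing $S_{+}=0$ when $\dim S_{0}=2$ follows by applying to a generator of $[S_{0},S_{0}]$ the centralizer fact used in $(d)$. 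For $(d)$: if $S_{+}$ is irreducible then $\mathrm{End}_{L_{1}}(S_{+})=\mathbb K$ by Schur's lemma, so $S_{0}$ acts by scalars; and if that scalar were $0$, a generator of $S_{0}$ would centralize $L_{1}\oplus S_{+}\simeq sl_{2}(\mathbb K)\rightthreetimes V_{m}$ with $m\geq 1$, contradicting the fact that the centralizer in $\widetilde{W_2}(\mathbb K)$ of such an $sl_{2}(\mathbb K)\rightthreetimes V_{m}$ is trivial (a short direct computation, or the rank argument once more).

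The step I expect to be hardest is the rank argument underlying $(a)$ and $(b)$: converting the qualitative statement ``any three elements of $\widetilde{W_2}(\mathbb K)$ are linearly dependent over $R$'' into quantitative restrictions on the abstract $L_{1}$-module $S$ requires choosing exactly the right nilpotent and solvable subalgebras (positive parts, Borel subalgebras, spans of highest-weight vectors) on which to run Corollaries~\ref{nilpotent}--\ref{quotient} and Lemmas~\ref{solv1}--\ref{solv2}, and the dichotomy ``$L_{1}$ of rank $1$ versus of rank $2$ over $R$'' must be threaded through the whole analysis. A secondary, essentially clerical, difficulty is checking that the degenerate low-dimensional configurations ($m=0$, $S_{0}$ two-dimensional, $S_{+}=0$) are matched to the correct items of the list.
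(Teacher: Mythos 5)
Your overall strategy coincides with the paper's: Levi--Mal'tsev decomposition, Lemmas \ref{semisimple} and \ref{Levi} to force $L_1\simeq sl_2(\mathbb K)$ once $S\neq 0$, and then a module-theoretic analysis of $S$ constrained by the classification of solvable and nilpotent subalgebras; your final bookkeeping of which configurations of $S_0,S_+$ land in which item of the list is also essentially right (indeed more careful than the paper's, which misnumbers some types). The difficulty is that your claims $(a)$ and $(b)$ are exactly where the proof lives, and they are asserted rather than proved. The mechanism you sketch for $(a)$ --- apply Corollary \ref{nilpotent} to the positive part $P=N_++\bigoplus_{i\ge1}S_i$ and extract $R$-linear dependences among $e$ and the highest-weight vectors --- is not shown to close. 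For example, if $S$ is abelian and $S_+=V_1\oplus V_1$, then $P=\langle e,w_1,w_2\rangle$ is abelian and Corollary \ref{nilpotent} only says $P$ has rank $1$ over $R$; turning the relations $w_i=\rho_i e$, $\rho_i\in R$, into a contradiction still requires bringing in $h$ and $f$ and a computation you do not carry out. The paper's decisive step is different and much shorter: for abelian reducible $S=S_1\oplus S_2$ it takes $M=\langle D_2\rangle+S$ with $0\neq D_2\in N_+$, notes that $[M,M]=[D_2,S_1]\oplus[D_2,S_2]$ with $\dim S_i/[D_2,S_i]\ge1$ because $\ad D_2$ is nilpotent on each summand, hence $\dim M/[M,M]\ge3$, contradicting the bound $\dim M/[M,M]\le2$ of Corollary \ref{quotient}. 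Likewise $(b)$ hides a real step: for irreducible $S_+=V_m$ the bracket $\Lambda^2V_m\to S$ need not vanish for representation-theoretic reasons alone ($\Lambda^2V_m$ can contain $V_0$ or $V_m$), and the paper excludes a nonabelian nilpotent radical by a separate ideal-theoretic argument (its Case 3), using that $\langle D_1\rangle=S^{k-1}$ is an ideal of $L$, so that $L_1\rightthreetimes(RD_1\cap L)$ is a subalgebra whose abelian radical violates the already-settled abelian case.

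Two smaller points. First, in the non-nilpotent solvable case you still must rule out $\dim S_0=2$ acting on a nonzero $S_+$; the paper does this by a parity argument on the $\ad h$-eigenvalues (the products $[e_i,f_j]$ would be weight vectors of the wrong parity, hence zero, forcing $S'\subseteq Z(S)$ and contradicting non-nilpotency), whereas your route through centralizers of $L_1$ in $\widetilde{W_2}(\mathbb K)$, split by the rank of $L_1$ over $R$, is plausible but relies on unproved auxiliary facts (that the centralizer of a rank-$2$ copy of $sl_2$ is at most one-dimensional, and that finite-dimensional solvable subalgebras of $\mathbb K(t)\frac{d}{dt}$ are abelian of dimension $\le1$ or the two-dimensional nonabelian algebra --- the latter does not follow from Lemma \ref{rat_mulem}$(2)$ alone). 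Second, claim $(d)$ is fine once $(a)$ and $(b)$ are available, and is essentially the paper's Case 4 conclusion. As it stands the proposal is a correct plan whose central lemmas remain unproved, and the specific tool it reaches for at the crux is weaker than the one the paper actually uses.
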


 \begin{proof}
 Let $S$ be the  solvable radical of $L$ and $L_1$ be a Levi factor of the algebra $L$.
 We can consider only the case $S \neq 0$ because of Lemma \ref{semisimple}.
 It follows from  Lemma \ref{Levi}  that $L_1 \simeq sl_2(\mathbb{K})$.
Choose  a Cartan subalgebra $H$ of the algebra $L_{1}$ and  a triangular decomposition
$L_{1}=N_{+}+H+N_{-}$  of $L_{1}.$

 {\underline {Case 1.}} $\dim S=1$ or $\dim S=2.$   If $\dim S=1$, then $L=L_1 \oplus S$ is a sum of two ideals
 and $L \simeq sl_2(\mathbb{K}) \oplus V_0$, where $V_0$ is a one-dimensional module over
 $sl_2(\mathbb{K})$. The algebra $L$ is of type 4 with $m=0$.
 Suppose that  $\dim S=2$. If $S$ is a nonabelian ideal of $L$, then $L$ is a direct sum of
 ideals $L=L_1 \oplus S$. Since $S=\langle w \rangle \rightthreetimes \langle v_0 \rangle$ for some elements $w, v_{0}\in S$, then $L \simeq gl_2(\mathbb{K}) \rightthreetimes \langle v_0 \rangle$ is
 of type (5) with $m=0$ because $L_{1}\oplus \langle w\rangle \simeq gl_{2}(\mathbb K).$  Let $S$ be abelian. Suppose that $S$ is a reducible
 module. Then $S=S_1 \oplus S_2$ is a direct sum of $L_1$-modules of dimension $1$ over $\mathbb{K}$.
 Take the Borel subalgebra   $B=H+N_{+}$   of $L_1.$ Then the  subalgebra $B \oplus S_1 \oplus S_2$ of $L$ is
 solvable of dimension $4$. But such an algebra does not exist by Lemmas \ref{solv1} and \ref{solv2}. This
 contradiction  shows that $S$ is irreducible and $L \simeq sl_2(\mathbb{K}) \rightthreetimes V_1$, where
 $V_1$ is of dimension 2 over $\mathbb{K}$. The algebra  $L$ is of type 4. Further, we will  assume that
 $\dim S \geq 3$.

  {\underline {Case 2.}}  $S$ is abelian (of dimension $\geq 3$).  Let us show that $S$ is an irreducible module over $L_{1}.$ Assume to the contrary that $S$ is reducible.
  If $S$ is a sum of one-dimensional submodules over $L_{1},$
 then $L=L_{1}\oplus S$ is a direct sum of ideals.  Its subalgebra $B+S$ is solvable, nonabelian and decomposable into direct sum of subalgebras $B\oplus S.$ The latter is impossible by Corollary \ref{decompose}.
 So we can assume  $S=S_1 \oplus S_2$ where $S_{1}, S_{2}$ are $L_{1}$-submodules, $\dim S_1\geq 2$
 and $S_1$ is irreducible  (note that $S_1$ and $S_2$ are ideals of $L$ because
 $S$ is abelian). Let  $D_{2} \in N_+$  be a nonzero element. Then the subalgebra $M=\langle D_2\rangle + S$ is nonabelian, nilpotent
 and $\dim M/[M, M]\leq 2$ by Corollary \ref{quotient}.
  On the other hand, since  $[M,M]=[D_2,S_1] \oplus [D_2,S_2],$  $\dim S_i/[D_2, S_i]\geq 1$, $i=1,2$ (because $\ad D_{2}$ acts nilpotently
  on $S_{i}$) we have
   $$\dim M/[M,M]=\dim \langle D_{2}\rangle +\dim S_1/[D_2, S_1] +\dim S_2/[D_2, S_2]\geq 3.$$
  The latter contadicts to Corollary \ref{quotient} and hence  $S$ is a
 simple $L_1$-module. It is obvious that   $L$ is of type 4. Note that  the subalgebra
 $M=\langle D_2\rangle + S$ is of the form $$\langle D_2, D_1, a D_1,\dots \frac{a^k}{k!} D_1 \rangle,  \ [D_2,D_1]=0, \
 D_1(a)=0, \ D_2(a)=1.$$

   {\underline {Case 3.}}  $S$ is  a nilpotent (nonabelian) ideal. Then by Corollary \ref{nilpotent} there exist elements $D_1, D_2 \in S$
 such that $$S=\langle D_2, D_1, a D_1, \ldots , (a^{k}/k!) D_1 \rangle ,  \ [D_2,D_1]=0, \
 D_1(a)=0, \ D_2(a)=1, \ \dim S \geq 3.$$
  Therefore $\langle D_1 \rangle=S^{k-1}$ and
 $\langle D_1 \rangle$ is an  ideal of $L$.
 Using Lemma \ref{ideals} we see that $R D_1 \cap L$ is an ideal of $L$ and therefore
 $L_1 \rightthreetimes \langle D_1, a D_1,\dots \frac{a^k}{k!} D_1 \rangle$ is a subalgebra
 of $L$. This subalgebra  has the abelian decomposable ideal $\langle D_1, a D_1,\dots \frac{a^k}{k!} D_1 \rangle .$ This is impossible by the  Case 1 and therefore the Case 3 is impossible.

 {\underline {Case 4.}}  $S$ is solvable (nonnilpotent). The $L_1$-submodule $S'=[S, S]$ is nilpotent, therefore   $S'$ is abelian
 by the previous case and $S'$ is an irreducible $L_1$-module by Cases 1 and  2. Since $\dim S/S' \leq 2$ by
 Corollary \ref{quotient} we have a direct decomposition $S=S' \oplus S_2$ of  $L_1$-submodules with $\dim S_{2}\leq 2.$
First suppose  that $\dim S_{2}=2.$  Let us show that $S_{2}$ is an irreducible $L_{1}$-module. Indeed, in other case $S_{2}\subseteq C_{S}(L_{1})$ and the centralizer $C_{S}(L_{1})$  a submodule of the  $L$-module $S.$  Because of previous cases we can assume that $\dim S'\geq 2$ and hence $S'$ is an irreducible $L_{1}$-module. Then obviously $C_{S}(L_{1})=S_{2}.$  Since $C_{S}(L_{1})=S_{2}$ is a subalgebra of $L$ the sum $S_{2}+L_{1}$ is a subalgebra of $L.$ The latter  is impossible because the subalgebra $S_{2}+L_{1}$ does not exist by the Case 1. This contradiction shows that $S_{2}$ is an irreducible $L_{1}$-module.

 Choose any nonzero elements $D_{2 }\in N_{+}$ and $h \in H$ and take standard bases $\lbrace e_0, e_1 \rbrace \subset  S_2$
 and $\lbrace f_0, f_1, \dots , f_m \rbrace \subset  S'$ of the $L_{1}$-modules $S_{2}$ and $S'$ respectively (recall that $L_{1}\simeq sl_{2}(\mathbb K)$).  Then the  linear operator  $\ad h$  has eigenvalues
 $1, -1$ on $S_{2}$. If the eigenvalues of $\ad h$ on $S'$ are even, then the elements
 $[e_i, f_j]$ are eigenvectors for $\ad h$  with odd eigenvalues. Since $[e_i, f_j]\in S'$ we see that $[e_i, f_j]=0$.
 Let now the eigenvalues of $\ad h$ on $S'$ be odd. Then $[e_i, f_j]$ are eigenvectors for
 $\ad h$ with even eigenvalues, so $[e_i, f_j]=0$, $i=0,1$, $j=0, 1, \dots m$.
 As $S'$ is abelian the latter means that $S' \subset Z(S)$. This is impossible because of our
 assumption on  $S$ and therefore $\dim S/S'=1$. Hence    $\dim S_{2}=1$.
 The subalgebra $S_{2}+L_{1}$ is obviously isomorphic  to $gl_{2}(\mathbb K )$ and $S'$ is an irreducible $S_{2}+L_{1}$-module. Since $S_{2}$ lies in the center of $S_{2}+L_{1}$ and $S$ is nonabelian we see that each nonzero element of $S_{2}$ acts on $S'$  as multiplication by a nonzero scalar.
We get a  Lie algebra of type 5 from this Lemma.

 \end{proof}
\begin{remark}\label{rem4}
 For each   type of  Lie algebras from this Lemma one can easily point out its  realization:

 (1) $\langle \frac{\partial}{\partial x}, \frac{\partial}{\partial y}, x\frac{\partial}{\partial x},
 x\frac{\partial}{\partial y}, y\frac{\partial}{\partial x}, y\frac{\partial}{\partial y},
 x(x\frac{\partial}{\partial x} +y\frac{\partial}{\partial y}), y(x\frac{\partial}{\partial x} +y\frac{\partial}{\partial y}) \rangle
 \simeq sl_3(\mathbb{K})$;

 (2)  $\langle \frac{\partial}{\partial x}, -x^2\frac{\partial}{\partial x}, -2x\frac{\partial}{\partial x}\rangle $
 $ \simeq sl_2(\mathbb{K})$
  and  $\langle \frac{\partial}{\partial x}, -x^2\frac{\partial}{\partial x} -2x\frac{\partial}{\partial x},
 \frac{\partial}{\partial y}, -y^2\frac{\partial}{\partial y}, -2y\frac{\partial}{\partial y}\rangle$
$ \simeq sl_2(\mathbb{K}) \oplus sl_2(\mathbb{K})$;

 (3) $\langle   x\frac{\partial}{\partial y}, y\frac{\partial}{\partial x},  x\frac{\partial}{\partial x}-y\frac{\partial}{\partial y},
 x^m(x\frac{\partial}{\partial x} +y\frac{\partial}{\partial y}), x^{m-1}y(x\frac{\partial}{\partial x} +y\frac{\partial}{\partial y}),
 \ldots , y^m(x\frac{\partial}{\partial x} +y\frac{\partial}{\partial y})
 \rangle\simeq sl_2(\mathbb K) \rightthreetimes V_m$.

 (4) $\langle  x\frac{\partial}{\partial x},
 x\frac{\partial}{\partial y}, y\frac{\partial}{\partial x}, y\frac{\partial}{\partial y},
 x^m(x\frac{\partial}{\partial x} +y\frac{\partial}{\partial y}), x^{m-1}y(x\frac{\partial}{\partial x} +y\frac{\partial}{\partial y}),
 \ldots , y^m(x\frac{\partial}{\partial x} +y\frac{\partial}{\partial y})
 \rangle\simeq gl_2(\mathbb K) \rightthreetimes V_m$.
 \end{remark}

We give a description of finite dimensional subalgebras of the Lie algebra $\widetilde{W}_2(\mathbb{K})$
 up to isomorphism as  Lie algebras.  In fact we give more information about such Lie algebras
 (up to choice of basis $\lbrace D_1, D_2 \rbrace$ of the two-dimensional vector space $\widetilde{W}_2(\mathbb{K})$
  over the field $R=\mathbb{K}(x,y)$). In order to clarify the structure of
 described subalgebras of $\widetilde{W}_2(\mathbb{K})$ we formulate the main Theorem in
 terms of generators and relations.
 \begin{theorem}\label{main}
 Let $L$ be a nonzero finite dimensional subalgebra of the Lie algebra $\widetilde{W}_2(\mathbb{K})$.
 Then the algebra $L$ belongs to one of the following types:

 {\rm (1)} $L=\langle e_1,\dots, e_n \rangle$, where $[e_i,e_j]=0$, $i,j=1 \dots n.$

 {\rm (2)} $L=\langle e_1,\dots, e_n , f\rangle$, where $[e_i,e_j]=0, [f, e_{i}]=e_{i}$, $i=1 \dots n.$

{\rm  (3)} $L=\langle e_0,\dots, e_n,f \rangle$, where $[e_i,e_j]=0$, $i,j=0 \dots n$, $[f,e_0]=\lambda e_0$,
 $[f,e_i]=\lambda e_i+e_{i-1}$, $i=1 \dots n$, $\lambda=0$ or $\lambda =1.$

 {\rm (4)} $L=\langle e_0,\dots, e_n,f \rangle$, where $[e_i,e_j]=0$, $i,j=0 \dots n$,
 $[f,e_i]=(1 +\beta m_i) e_i $, $i=0 \dots n$, $m_i \in \mathbb{Z}$, $\beta \in \mathbb{K}^{\star}$ and
 $m_i \neq m_j$ provided  that  $i \neq j.$

 {\rm (5)} $L=\langle e_0,\dots, e_n,f,g \rangle$, where $[e_i,e_j]=0$, $i,j=0 \dots n$, $[f,e_0]=0$,
 $[f,e_i]=e_{i-1}$, $i=1 \dots n$, $[g,e_i]=(i-\beta) e_i$, $i=0 \dots n$, $[g,f]=f-\gamma e_n$,
 $\beta, \gamma \in \mathbb{K}.$

 {\rm (6)} $L=\langle e_0,\dots, e_n,f,g \rangle$, where $[e_i,e_j]=0$, $i,j=0 \dots n$,
 $[f,e_i]=e_i$, $i=0 \dots n$, $[g,e_0]=0$, $[g,e_i]=e_{i-1}$, $i=1 \dots n$,
 $[f,g]=0.$

{\rm  (7)} $L=\langle e_0,\dots, e_n,f,g \rangle$, where $[e_i,e_j]=0$, $i,j=0 \dots n$,
 $[f,e_i]=e_i$, $i=0 \dots n$, $[g,e_i]=(1+\beta m_i) e_i$, $i=0 \dots n$, $[g,f]=0$,
 $\beta \in \mathbb{K}^{\star}$, $m_i \in \mathbb{Z}$, and
 $m_i \neq m_j$ if $i \neq j.$

{\rm  (8)} $L=\langle e_0,\dots, e_n,f,g,h \rangle$, where $[e_i,e_j]=0$, $i,j=0 \dots n$, $[f,e_0]=0$,
 $[f,e_i]=e_{i-1}$, $i=1 \dots n$, $[g,e_i]=e_i$, $i=0 \dots n$, $[g,f]=\alpha  e_n$, $[h, e_{i}]=-(\beta +i)e_{i},$
 $[h, f]=f-\gamma e_{n},$ $[h, g]=0, \alpha , \beta \in \mathbb{K}, \gamma =\alpha (\beta -n).$

{\rm  (9)}  $L \simeq sl_2(\mathbb{K})$, or  $L \simeq sl_2(\mathbb{K}) \oplus sl_2(\mathbb{K}) $;

{\rm  (10)}  $L \simeq sl_3(\mathbb{K})$;

 (11) $sl_2(\mathbb{K}) \rightthreetimes V_m$, where $V_m$ is the irreducible module
 over $sl_2(\mathbb{K})$ of dimension $m+1$, $m=0, 1, \dots$;

 (12) $gl_2(\mathbb{K}) \rightthreetimes V_m$, where $V_m$ is the irreducible module
 over $gl_2(\mathbb{K})$ of dimension $m+1$, $m=0, 1, \dots$ and  nonzero central
 elements of $gl_2(\mathbb K)$ act on $V_m$ as nonzero scalars.
 \end{theorem}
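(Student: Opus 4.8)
The plan is to read off the classification from the structural results already proved, organizing the argument by the rank of $L$ over $R$ and by whether $L$ is solvable. Since $\widetilde{W}_2(\mathbb{K})$ is two-dimensional over $R$, the subalgebra $L$ has rank $1$ or $2$ over $R$. In each of the cases below one of Lemmas~\ref{rank1}, \ref{solv1}, \ref{solv2}, \ref{unsolvable} produces an explicit basis of $L$ inside $\widetilde{W}_2(\mathbb{K})$; it then remains only to rename the basis elements $e_i,f,g,h$ and to compute the brackets among them by means of Lemma~\ref{basic}(1), which transforms each explicit realization into one of the presentations (1)--(12).

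First I would dispose of the easy cases. If $L$ is abelian it is of type (1), so from now on $L$ is nonabelian. If $L$ is nonsolvable, Lemma~\ref{unsolvable} applies at once and shows that $L$ is isomorphic to $sl_3(\mathbb{K})$, to $sl_2(\mathbb{K})$, to $sl_2(\mathbb{K})\oplus sl_2(\mathbb{K})$, to $sl_2(\mathbb{K})\rightthreetimes V_m$ or to $gl_2(\mathbb{K})\rightthreetimes V_m$, i.e. $L$ is of type (9), (10), (11) or (12). So we may assume $L$ solvable and nonabelian, and split according to rank. If $L$ has rank $1$, then Lemma~\ref{rank1} leaves only its case (2) (the abelian case and the case $L\simeq sl_2(\mathbb{K})$ being excluded); renaming the $\mathbb{K}$-basis of $L$ as $e_1,\dots,e_n,f$ with $e_1=D_1$ and $f=bD_1$, the relations $D_1(a_i)=0$, $D_1(b)=-1$ together with Lemma~\ref{basic}(1) yield $[e_i,e_j]=0$ and $[f,e_i]=e_i$, which is exactly type (2).

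Now suppose $L$ is solvable, nonabelian and of rank $2$. Being a nonzero solvable finite-dimensional Lie algebra over the algebraically closed field $\mathbb{K}$ of characteristic zero, $L$ possesses a one-dimensional ideal $\langle D_1\rangle$: apply Lie's theorem to the adjoint action of $L$ on the last nonzero term of its derived series (an abelian ideal of $L$) to obtain a common eigenvector. Put $I=RD_1\cap L$, which is an ideal of $L$ by Lemma~\ref{ideals}. If $I$ is abelian, Lemma~\ref{solv1} supplies a basis of $L$ in one of its three normal forms; setting $e_i=(a^i/i!)D_1$ or $e_i=a_iD_1$ (with $e_0=D_1$ and, where needed, the exponent $m_0=0$), $f=D_2$, and, in the third form, $g=bD_1+aD_2$, and computing all brackets by Lemma~\ref{basic}(1), one identifies $L$ with type (3), (4) or (5) respectively. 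If $I$ is nonabelian, Lemma~\ref{solv2} supplies a basis of $L$ in one of three normal forms; taking for $f$ the element $bD_1$ that acts as the identity on the nilpotent part, $g=D_2$, and, in the third form, the remaining basis element for $h$, and again reading off the brackets by Lemma~\ref{basic}(1), one identifies $L$ with type (6), (7) or (8). Hence in every case $L$ is of one of the types (1)--(12).

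The argument has no serious obstacle; the points demanding care are organizational. One must check that the four invoked lemmas leave no gap, that is, that the abelian, the rank-$1$ solvable, the rank-$2$ solvable, and the nonsolvable cases are mutually exhaustive; one must produce the one-dimensional ideal needed to apply Lemmas~\ref{solv1}--\ref{solv2}; and one must carry out the bracket computations via Lemma~\ref{basic}(1). For most types these computations reproduce the displayed relations verbatim; for types (5) and (8) one must in addition absorb the constants occurring in Lemmas~\ref{solv1} and \ref{solv2} by a harmless rescaling (and sign normalization) of the chosen basis. This index- and sign-sensitive bookkeeping is the only slightly delicate part of the proof.
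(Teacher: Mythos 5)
Your proposal is correct and follows essentially the same route as the paper: dispose of the abelian and nonsolvable cases, then split the solvable nonabelian case by rank over $R$ and, in rank $2$, by whether the ideal $I=RD_1\cap L$ is abelian, invoking Lemmas~\ref{rank1}, \ref{solv1}, \ref{solv2} and \ref{unsolvable} and relabelling the bases. Your explicit justification of the existence of a one-dimensional ideal (via Lie's theorem on the last term of the derived series) is a detail the paper leaves implicit, but it is the same argument.
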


 \begin{proof}
 Let $L$ be a finite dimensional solvable subalgebra of the Lie algebra $\widetilde{W}_2(\mathbb{K})$.
 If $L$ is of rank 1 over $R$, then $L$ is of type 1 or  2  by Lemma \ref{rank1}.
 Let  $L$ be  of rank 2 over $R.$ If $L$ possesses an abelian ideal $I$ of rank 1 over $R$ which is maximal with this property, then $L$ is of type 3, 4 or 5 by Lemma \ref{solv1} (we denote $e_{i}=a_{i}D_{1}$ in type 4 and
  $e_{i}=(a^{i}/i!)D_{1}$ for types 3 and 5).  Let the ideal $I$ be nonabelian. Then by Lemma \ref{solv2} $L$ is one of types 6, 7 or 8 (as above we denote $e_{i}=a_{i}D_{1}$ in type 7 and
  $e_{i}=(a^{i}/i!)D_{1}$ for types 6 and 8, $f=bD_{1}$ for types 6 and 7 and $f=D_{2}, g=(v-\alpha (a^{n}/n!))D_{1}$ for type 8 of this Theorem).
  Further, let  $L$  be nonsolvable. If $L$ is semisimple, then $L$ is one of types 9 or 10 by Lemma \ref{semisimple}.
  Finally, if solvable radical of $L$ is nonzero, then $L$ is either of type 11 or of type 12 by Lemma  \ref{unsolvable}.

 \end{proof}

%%%%%%%%%%%%%%%%%%%%%%%%%%%%%%%%%%%%%%%%%%

%
\end{document}